\documentclass[12pt]{article}
\usepackage{amsmath, amssymb, amsthm, a4wide}
\usepackage{geometry}
\usepackage{fancyhdr}
\usepackage{enumerate}
\usepackage{multirow}
\geometry{a4paper, left=1.08in, right=1.08in, top=2in, bottom=2in}
\newtheorem{theorem}{Theorem}[section]

\newtheorem{corollary}[theorem]{Corollary}
\newtheorem{definition}[theorem]{Definition}
\newtheorem{proposition}[theorem]{Proposition}
\newtheorem{example}[theorem]{Example}
\newtheorem{remark}[theorem]{Remark}

\numberwithin{equation}{section}

\allowdisplaybreaks[4]
\newcounter{newlist}

\newcounter{nnnewlist}

\newcounter{nelist}

\def\tX{\tilde{X}}

\def\lt{\left}
\def\rt{\right}

\def\lu{\underline{\mu}}
\def\ou{\overline{\mu}}
\def\ls{\underline{\sigma}}
\def\os{\overline{\sigma}}

\def\eg{{\mathbb{E}_G}}
\def\vp{\varphi}
\def\ve{\varepsilon}

\def\br{\mathbb{R}}

\def\cp{\mathcal{P}}
\def\cf{\mathcal{F}}
\def\cb{\mathcal{B}}

\def\cf{\mathcal{F}}

\def\lu{\underline{\mu}}
\def\ou{\overline{\mu}}
\def\ls{\underline{\sigma}}
\def\os{\overline{\sigma}}

\def\be{\hat{\mathbb{E}}}

\def\bn{\mathbb{N}}

\def\pe{{\mathbb{E}^{\cp}}}

\def\eg{\mathbb{E}_G}

\def\vp{\varphi}
\def\ve{\varepsilon}

\begin{document}

\title{On the functional central limit theorem with mean-uncertainty}

\author{Xinpeng LI\footnotemark[2]}
 \renewcommand{\thefootnote}{\fnsymbol{footnote}}
 \footnotetext[2]{Research Center for Mathematics and Interdisciplinary Sciences; Frontiers Science Center for Nonlinear Expectations (Ministry of Education), Shandong University, 266237, Qingdao, China; School of Mathematics, Shandong University, 250100, Jinan, China.\newline
 Email: lixinpeng@sdu.edu.cn (Xinpeng LI)}
\date{ }

\maketitle

\noindent\textbf{Abstract.}
We introduce a new basic model for independent and identical distributed sequence on the canonical space $(\mathbb{R}^\bn,\mathcal{B}(\br^\bn))$ via probability kernels with model uncertainty. Thanks to the well-defined upper and lower variances, we obtain a new functional central limit theorem with mean-uncertainty by the means of martingale central limit theorem and stability of stochastic integral in the classical probability theory. Then we extend it from the canonical space to the general sublinear expectation space. The corresponding proofs are purely probabilistic and do not rely on the nonlinear partial differential equation.\\

\noindent\textbf{Keywords:} Canonical space, Central limit theorem, Independence and identical distribution, Mean-uncertainty, Sublinear expectation, Upper and lower variances\\

%\ead{peng@sdu.edu.cn}
%\address{Research Center for Mathematics and Interdisciplinary Sciences; Frontiers Science Center for Nonlinear Expectations (Ministry of Education),  Shandong University, 266237, Qingdao, China\\
%School of Mathematics, Shandong University, 250100, Jinan, China }

\section{Introduction}

The notions of independence and identical distribution are very important in the probability and statistics. In classical probability theory, the familiar way to construct canonical sequence $\{X_i\}_{i\in\bn}$ of independent random variables which having prescribed marginal laws $\{\mu_i\}_{i\in\bn}$ is by product measure $\mu=\otimes_{i=1}^\infty\mu_i$ on the canonical space $(\br^\bn,\cb(\br^\bn))$ with $X_i(\omega)=\omega_i, \omega=(\omega_1,\cdots,\omega_n,\cdots)\in\br^\bn$. In general, any joint law $\mu$ on $(\br^\bn,\cb(\br^\bn))$ can be decomposed in terms of its probability kernels in the form (see Yan \cite{yan}): $\forall n\in\bn$, $ \forall A\in\cb(\br^n)$,
$$\mu(A\times\br^{\bn-n})=\int_\br\mu_1(dx_1)\int_{\br}\kappa_{2}(x_1,dx_2)\cdots\int_\br I_{A}\kappa_n(x_1,\cdots,x_{n-1},dx_{n}).$$
In particular, if the probability kernel $\kappa_i(x_1,\cdots,x_{i-1},\cdot)$ is independent of $(x_1,\cdots,x_{i-1})$ for all $i\geq 2$, the canonical random variables $\{X_i\}_{i\in\bn}$ are independent. If further assume that $\kappa_i(x_1,\cdots,x_{i-1},\cdot)=\mu_1(\cdot)$ for all $i\geq 2$, then $\{X_i\}_{i\in\bn}$ is independent and identically distributed (i.i.d.) under $\mu$. This formulation inspires us to introduce a basic model on canonical space $(\br^\bn,\cb(\br^\bn))$ via probability kernels such that the canonical random variables $\{X_i\}_{i\in\bn}$ are independence and identically distributed with model uncertainty as described in Peng \cite{P2019}, i.e., all the probability kernels belong to a given set of probability measures. Such formulation provides a new interpretation of i.i.d. sequence on sublinear expectation space.

The i.i.d. assumption is usually used in the central limit theorem (CLT). Peng \cite{P2019b} initially introduced the new notion of i.i.d. sequence on the sublinear expectation space $(\Omega,\mathcal{H},\be)$ and  corresponding CLT with zero-mean is established in Peng \cite{P08,pengsur,P09,P2019,P2019b}, known as Peng's CLT:

For an i.i.d. sequence $\{X_i\}_{i\in\bn}$ with $\be[X_1]=\be[-X_1]=0$ and
\begin{equation}\label{uc}
\lim_{\lambda\rightarrow\infty}\mathbb{\hat{E}}[(|X_1|^2-\lambda)^+]=0
\end{equation}
we have
$$\lim_{n\rightarrow\infty}\mathbb{\hat{E}}\lt[\vp\lt(\frac{\sum_{i=1}^nX_i}{\sqrt{n}}\rt)\rt]=\eg[\vp(\xi)], \ \ \forall \vp\in C_{b.Lip}(\br),$$
where $\eg$ is the $G$-expectation corresponding to the $G$-normally distributed random variable $\xi$, which is characterized by the so-called $G$-heat equation. The corresponding proof adopted the partial differential equation (PDE) approach which mainly based on the deep result of the interior regularity for PDE (see Krylov \cite{Kry2}). We emphasize that the regularity of $\be$ (see Definition \ref{reg}) is not required here. {In particular, the CLT holds when $\be$ is only finitely additive without the assumption of regularity. In this case, there does not exist probability measure such that underlying random variables are i.i.d., therefore the classical probability theory is not applicable (see Example \ref{ex1}).}

After Peng established CLT on sublinear expectation space, Zhang \cite{Zhang} obtained the sufficient and necessary conditions of CLT for i.i.d. random variables, in which the condition (\ref{uc}) can be weakened to $\lim_{\lambda\rightarrow\infty}\lambda\mathbb{V}(|X_1|^2\geq\lambda)=0$, where $\mathbb{V}$ is the capacity introduced by $\mathbb{\hat{E}}$. Peng \cite{P09} proposed a new proof of CLT by weakly compact method but involving PDE approach to characterize the $G$-normal distribution. Song \cite{song2} provided an estimate of the convergence rate for CLT by Stein's method, and Krylov \cite{Kry1} gave error estimates in CLT for not necessarily non-degenerate case by the finite-difference approximations for Bellman's equation. Besides, convergence to the $G$-normal distribution also occurs for non-independent random variables or for non-identical distributions (see Li \cite{Li2015}, Li and Shi \cite{LS}, Zhang \cite{zhang2020}). Zhang \cite{Z2015} also considered the functional CLT based on Peng's CLT in \cite{P2019}. We note that all papers on CLT under sublinear expectation are assumed that the underlying random variables are zero-mean and most of proofs rely on PDE approach.

{An interesting problem is that how about CLT for the random variables with mean-uncertainty? Recently, Chen and Epstein \cite{CE} proved a CLT for random variables with mean-uncertainty and unambiguous conditional variance, where the limit is defined by a backward stochastic differential equation (see Pardoux and Peng \cite{PP}, Peng \cite{P97}).  Chen et al. \cite{CEZ} further established a CLT under the assumption that conditional variances can vary in a fixed interval, and the corresponding limit distribution can be calculated by the oscillating Brownian motion.  In \cite{CE} and \cite{CEZ}, one important assumption is that all measures in the set $\cp$  are equivalent on the filtration, while such assumption is not necessary in our paper. Fand et al. \cite{FPSS} also considered the CLT with mean-uncertainty with additional assumptions, which converges to the classical normal distribution.}

Motivated by the weak  approximation of $G$-expectations introduced by Dolinsky et al. \cite{DNS}, we establish the functional CLT on the canonical space $(\br^\bn,\mathcal{B}(\br^\bn), \cp)$, where $\cp$ is a set of probability measures introduced via probability kernels with model uncertainty. The proof relies on the martingale central limit theorem and stability of stochastic integral in the classical probability theory. The CLTs obtained in this paper are in the functional forms, and also take the mean-uncertainty into consideration due to the well-defined upper and lower variances. It is easy to extend functional CLT from the canonical space to the general sublinear expectation space by the representation theorem. Thus we provide a new proof of CLT in \cite{P2019}, which is purely probabilistic, then generalize it to the mean-uncertainty case. We note that the corresponding limit distribution is still $G$-normally distributed. This new CLT illustrates the broad applicability of $G$-normal distribution for the situations with mean-uncertainty. It is interesting that the variant form of such CLT with mean-uncertainty provides a simple proof of Erd\"{o}s-Kac Theorem in number theory (see Guo et al. \cite{gll2}).

This paper also provides a new methodology to study limit theorems on general sublinear expectation space. We firstly establish limit theorems on the canonical space. In this step, the classical martingale limit theorem can be applied to derive the desired limit theorems with model uncertainty, and then we extend them to the general sublinear expectation space by the representation theorem.

The remainder of this paper is organized as follows. Section 2 describes the basic model for i.i.d. sequence on canonical space $(\br^\bn,\cb(\br^\bn))$ introduced by probability kernels with model uncertainty. The corresponding functional CLT with mean-uncertainty are established in Section 3. In Section 4, we extend functional CLT from canonical space to general sublinear expectation space. Some examples are provided in Section 5. The law of large numbers, which is used in the proof of functional CLT, is given in Appendix.

\section{Basic model on canonical space}

Let $(\br^\bn,\cb(\br^\bn))$ be the canonical space and $\{X_i\}_{i\in\bn}$ the sequence of canonical random variables defined by $X_i(\omega)=\omega_i$ for $\omega=(\omega_1,\cdots,\omega_n,\cdots)\in\br^\bn$. For each $i\in\bn$, $\cp_i$ is the convex and weakly compact set of probability measures on $(\br,\mathcal{B}(\br))$ characterizing the uncertainty of the distributions of $X_i$. We define a set of joint laws on $(\br^\bn,\mathcal{B}(\br^\bn))$ via the probability kernels as following:
\begin{equation}
\begin{array}
[c]{l}
\mathcal{P}=\{P\ \text{is\ probability\ measure\ on } (\br^{\bn},\cb(\br^\bn))\ \text{such\ that }
\\P(A^{(n)}\times\br^{\bn-n})=\int_\br\mu_1(dx_1)\int_{\br}\kappa_{2}(x_1,dx_2)\cdots\int_\br I_{A^{(n)}}\kappa_n(x_1,\cdots,x_{n-1},dx_{n})\\
\forall n\geq 1, \ \forall A^{(n)}\in\cb(\br^n),\ \ \kappa_i(x_1,\cdots,x_{i-1},\cdot)\in\cp_i,\ \ 2\leq i\leq n,\ \mu_1\in\cp_1\},
\end{array}
\label{e1}%
\end{equation}
where for each $i\in\bn$, $\kappa_i(x_1,\cdots,x_{i-1}, dx_i)$ is the probability kernel satisfying:
\begin{itemize}
\item[(i)] $\forall (x_1,\cdots,x_{i-1})\in\br^{i-1}$, $\kappa_i(x_1,\cdots,x_{i-1},\cdot)$ is a probability measure on $(\br,\mathcal{B}(\br))$.
\item[(ii)] $\forall B\in\mathcal{B}(\br)$, $\kappa_i(\cdot, B)$ is $\mathcal{B}(\br^{i-1})$-measurable.
\end{itemize}
The existence of such $P$ on $(\br^\bn,\cb(\br^\bn))$ is shown by Ionescu-Tulcea theorem.

{
We note that the set $\cp_i$ is independent of $(x_1,\cdots,x_{i-1})$, i.e., the uncertainty of the distributions of $X_i$ does not change for any realization of history $(x_1,\cdots,x_{i-1})$ of the random vector $(X_1,\cdots,X_{i-1})$, thus we call  $X_i$  independent of $(X_1,\cdots,X_{i-1})$. Here independence means the uncertainty of the distributions of $X_i$ is independent of random vector $(X_1,\cdots, X_{i-1})$. $\{X_i\}_{i\in\bn}$ can be regarded as the canonical stochastic process with discrete time, it is obvious that such independence is not symmetric in general. Furthermore, if the uncertainties of each $X_i$ are the same, i.e., $\cp_i=\cp_1$, $\forall i\geq 2$, we say that $\{X_i\}_{i\in\bn}$ is an i.i.d. sequence. In this case, $\cp$ is determined by the set $\cp_1$ of marginal laws for $X_1$.}

\begin{remark}
In particular, if each $\cp_i=\{\mu_i\}$ is a singleton, above construction is just the classical procedure to construction i.i.d. sequence on the product space by the product measure $\mu=\otimes_{i=1}^\infty\mu_i$.
\end{remark}

For each $\mathcal{B}(\br^\bn)$-measurable random variable $X$, we introduce the sublinear expectation $\pe$ defined by
$$\mathbb{E}^{\cp}[X]:=\sup_{P\in\cp}E_P[X].$$
The canonical filtration $\{\cf_i\}_{i\in\bn}$ is defined as $\cf_i=\sigma(X_k, 1\leq k\leq i)$ with convention $\cf_0=\{\emptyset,\br^\bn\}$. Then we have the following proposition.

\begin{proposition}\label{prop23}
For each $P\in\cp$ and $\vp\in C_{b}(\br)$, we have
\begin{equation}-\mathbb{E}^{\cp}[-\vp(X_i)]\leq E_P[\vp(X_i)|\cf_{i-1}]\leq\mathbb{E}^{\cp}[\vp(X_i)], \ \ P-\text{a.s.}, \ \forall i\in\bn.\label{eq2}\end{equation}
\end{proposition}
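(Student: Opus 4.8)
The plan is to reduce both inequalities in \eqref{eq2} to a single pointwise statement about integration against the probability kernel $\kappa_i$, which under any fixed $P\in\cp$ is the regular conditional distribution of $X_i$ given the past. Accordingly, I first fix $P\in\cp$ and $i\in\bn$ and identify an explicit version of $E_P[\vp(X_i)\mid\cf_{i-1}]$. For $i\geq 2$ I claim that
$$E_P[\vp(X_i)\mid\cf_{i-1}](\omega)=\int_\br\vp(x)\,\kappa_i(\omega_1,\cdots,\omega_{i-1},dx),\qquad P\text{-a.s.},$$
while for $i=1$ we have $E_P[\vp(X_1)\mid\cf_0]=\int_\br\vp\,d\mu_1$ since $\cf_0$ is trivial. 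To verify this I would use the defining relation \eqref{e1} together with a functional monotone class argument: the right-hand side is $\cf_{i-1}$-measurable by condition (ii), and it suffices to check the defining identity of conditional expectation on cylinder sets $B=A^{(i-1)}\times\br^{\bn-(i-1)}$, where $E_P[\vp(X_i)\,I_B]=E_P\bigl[(\int_\br\vp\,d\kappa_i)\,I_B\bigr]$ is immediate from the Ionescu--Tulcea factorisation in \eqref{e1}. This regular-conditional-distribution identity is the technical heart of the argument.

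Since $\kappa_i(x_1,\cdots,x_{i-1},\cdot)\in\cp_i$ for every history $(x_1,\cdots,x_{i-1})$, the conditional expectation is then pointwise sandwiched by the corresponding extremal integrals over $\cp_i$:
$$\inf_{\nu\in\cp_i}\int_\br\vp\,d\nu\;\leq\;\int_\br\vp(x)\,\kappa_i(\omega_1,\cdots,\omega_{i-1},dx)\;\leq\;\sup_{\nu\in\cp_i}\int_\br\vp\,d\nu,\qquad P\text{-a.s.}$$

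Next I would identify these two extremal quantities with the sublinear expectation by proving
$$\pe[\vp(X_i)]=\sup_{\nu\in\cp_i}\int_\br\vp\,d\nu,\qquad -\pe[-\vp(X_i)]=\inf_{\nu\in\cp_i}\int_\br\vp\,d\nu.$$
For the bound ``$\leq$'' in the first identity I use the tower property, $E_P[\vp(X_i)]=E_P\bigl[E_P[\vp(X_i)\mid\cf_{i-1}]\bigr]\leq\sup_{\nu\in\cp_i}\int_\br\vp\,d\nu$, and take the supremum over $P\in\cp$. For the reverse inequality, given any $\nu\in\cp_i$ I exhibit a measure in $\cp$ whose $i$-th kernel is the constant kernel $\kappa_i(\cdot)\equiv\nu$ (with the remaining kernels chosen arbitrarily in the respective $\cp_j$, and $\mu_1=\nu$ when $i=1$), for which the marginal law of $X_i$ is exactly $\nu$ and hence $E_P[\vp(X_i)]=\int_\br\vp\,d\nu$; taking the supremum over $\nu\in\cp_i$ yields ``$\geq$''. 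The lower identity follows by applying the upper one to $-\vp$. Combining these identities with the pointwise sandwich of the previous step gives \eqref{eq2} for every $P\in\cp$ and $i\in\bn$.

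The main obstacle is the first step, namely rigorously establishing that integration against $\kappa_i$ is a genuine version of $E_P[\vp(X_i)\mid\cf_{i-1}]$ under each $P\in\cp$; this is where the structure \eqref{e1} of $\cp$ and the measurability condition (ii) are essential. Once this identity is in place, the monotonicity bound and the tower-property computation identifying $\pe[\vp(X_i)]$ with $\sup_{\nu\in\cp_i}\int\vp\,d\nu$ are routine, and the extension from $\vp\in C_b(\br)$ poses no difficulty since all integrals are finite for bounded continuous $\vp$.
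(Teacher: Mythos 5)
Your proposal is correct and takes essentially the same route as the paper: fix $P\in\cp$, identify $E_P[\vp(X_i)|\cf_{i-1}]$ as integration of $\vp$ against the kernel $\kappa_i(x_1,\cdots,x_{i-1},\cdot)$, and conclude from $\kappa_i(x_1,\cdots,x_{i-1},\cdot)\in\cp_i$. The only difference is that you spell out details the paper treats as immediate, namely the cylinder-set/monotone-class verification of the kernel identity (the paper just invokes regular conditional probabilities) and the identification $\pe[\vp(X_i)]=\sup_{\nu\in\cp_i}E_\nu[\vp]$ via the tower property and a constant-kernel element of $\cp$.
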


\begin{proof}
For each $P\in\cp$, by the regularity of condition probability, almost surely we have,  for $\omega=(x_1,\cdots,x_n,\cdots)\in\br^\bn$,
$$E_P[\vp(X_i)|\mathcal{F}_{i-1}](\omega)=E_{\kappa_i(x_1,\cdots,x_{i-1}, \cdot)}[\vp(X_i)],$$
then (\ref{eq2}) holds since $\kappa_i(x_1,\cdots,x_{i-1}, \cdot)\in\cp_i$.
\end{proof}

We note that (\ref{eq2}) is equivalent to the classical notion of  independence when  $\cp$ is a singleton. In fact, (\ref{eq2}) also holds on the sublinear expectation space, see Guo and Li \cite{GL}.
\begin{remark}
If we further assume that
\begin{equation}
\label{cond11}
\lim_{\lambda\rightarrow\infty}\sup_{i\in\bn}\pe[(|X_i|^2-\lambda)^+]=0,
\end{equation}
then (\ref{eq2}) holds for continuous function $\vp$ satisfying quadratic growth condition $|\vp(x)|\leq C(1+|x|^2)$, where $C$ is a constant. In particular, if $\pe[X_i]=\pe[-X_i]=0$ for all $i\in\bn$, then for each $P\in\cp$,
$$E_P[X_i|\cf_{i-1}]=0,\ \ P-\text{a.s.}$$
\end{remark}
\begin{remark}
In fact, (\ref{eq2}) still holds without the assumptions of convexity and weakly compactness for each $\cp_i$. The sequence $\{X_i\}_{i\in\bn}$ satisfying (\ref{eq2}) is called pseudo-independent, see \cite{GL}.
\end{remark}
%The following proposition is important to prove CLT in the next section.
%\begin{proposition}\label{prop2}
%Let $\{X_i\}_{i\in\bn}$ be an i.i.d. sequence on the canonical space $(\br^\bn,\cb(\br^\bn))$ with $\pe[X_1]=\pe[-X_1]=0$ and $\os^2=\pe[|X_1|^2]$, $\ls^2=-\pe[-|X_1|^2]$. We further assume that (\ref{cond11}) holds. Then for each $P\in\cp$,
%\begin{equation*}
%\ls^2\leq E_P[|X_i|^2|\cf_{i-1}]\leq\os^2, \ \ \ \forall i\in\bn.
%\end{equation*}
%Furthermore, given functions $\{V_i(x_1,\cdots, x_{i-1})\}_{i\in\bn}$ with convention $V_1(x_0)$ being a constant, such that $V_i$ is $\cf_{i-1}-$measurable and $\ls^2\leq V_i\leq\os^2$, $1\leq i\leq n$.  There exists $P^*\in\cp$ such that
%\begin{equation}\label{cvv}
%E_{P^*}[|X_i|^2|\cf_{i-1}]=V_i,\ \  P^*-\text{a.s.}, \ \ 1\leq i\leq n.
%\end{equation}
%\end{proposition}

\section{Functional central limit theorem on the canonical space}

In this section, we establish the functional CLT on the canonical space, which converges to the $G$-expectation constructed by $G$-heat equation. Here we use another representation of $G$-expectation obtained in Denis et al. \cite{DHP11} and then developed by  Dolinsky et al. \cite{DNS}. We adopt the formulations in \cite{DNS}.
\subsection{Representation of $G$-expectation}
Let $\Omega_0=C([0,1])$ be the space of all continuous paths $\omega=(\omega_t)_{0\leq t\leq 1}$ with $\omega_0=0$, endowed with the norm $||\omega||_\infty=\sup_{0\leq t\leq 1}|\omega_t|$. The canonical process $B$ is defined by $B_t(\omega)=\omega_t$ and its quadratic variation process is denoted by $\langle B\rangle$. The canonical filtration is $\cf^B_t=\sigma(B_s,0\leq s\leq t).$

For the fixed non-negative interval $\Theta$, we set
$$\cp_\Theta=\lt\{P: B\ \text{is}\ \text{martingale\ under\ } P \ \text{and}\ \frac{d\langle B\rangle_t}{dt}\in\Theta, P\times dt-\text{a.e.}  \rt\}$$

Let $P_W$ be the Wiener measure on $C([0,1])$ and we define
$$\mathcal{Q}_\Theta=\lt\{P_W\circ\lt(\int f(t,B)dB_t\rt)^{-1}: f\in\mathcal{A}_\Theta \rt\},$$
where $\mathcal{A}_\Theta$ is the collection of all adapted continuous functions on $[0,1]\times\Omega_0$ taking values in $\sqrt{\Theta}$.

It is proved in \cite{DNS} that upper expectations of $\cp_\Theta$ and $\mathcal{Q}_\Theta$ coincide, i.e.,
$$\mathbb{E}^{\cp_\Theta}[\vp]=\sup_{P\in\cp_\Theta}E_P[\vp]=\sup_{P\in\mathcal{Q}_\Theta}E_P[\vp]=\mathbb{E}^{\mathcal{Q}_\Theta}[\vp], \ \ \ \forall \vp\in C_b(\Omega_0).$$

\subsection{Upper and lower variances with mean-uncertainty}

Let $\cp_0$ be a weakly compact and convex set of probability measures on $(\br,\cb(\br))$ and $X$ be the canonical random variable, i.e., $X(\omega)=\omega, \ \forall \omega\in\br$.
We define
$$\mathbb{E}^{\cp_0}[\vp(X)]=\sup_{P\in\cp_0}E_P[\vp], \ \ \forall \vp\in C(\br).$$

In order to deal with mean-uncertainty case, i.e., $\mathbb{E}^{\cp_0}[X]>-\mathbb{E}^{\cp_0}[-X]$, we need to define the proper upper and lower variances.

We denote $V_P(X)$ the classical variance of $X$ under probability measure $P$, then
$$V_P(X)=E_P[(X-E_P[X])^2]=\min_{\mu\in\br}E_P[(X-\mu)^2].$$
It is natural to introduce the upper and lower variances under sublinear expectation $\mathbb{E}^{\cp_0}$ instead of $E_P$ (see Walley \cite{walley} or Li et al. \cite{lly}).

\begin{definition}\label{uvc}
For the canonical random variable $X$ on $(\br,\cb(\br))$ with $\mathbb{E}^{\cp_0}[|X|^2]<\infty$, define the upper variance of $X$ to be
$$\overline{V}(X):=\min_{\lu\leq\mu\leq\ou}\{\mathbb{E}^{\cp_0}[(X-\mu)^2]\},$$
and the lower variance of $X$ to be
$$\underline{V}(X):=\min_{\lu\leq\mu\leq\ou}\{-\mathbb{{E}}^{\cp_0}[-(X-\mu)^2]\},$$
where $\ou=\mathbb{E}^{\cp_0}[X]$ and $\lu=-\mathbb{E}^{\cp_0}[-X]$.
\end{definition}

The following relation between classical variance and upper and lower variances was proved in \cite{walley} (see also \cite{lly}).
\begin{proposition}\label{prop34}
\begin{equation}
\overline{V}(X)=\max_{P\in\cp_0}V_P(X). \label{ucc1}
\end{equation}
\begin{equation}
\underline{V}(X)=\min_{P\in\cp_0}V_P(X). \label{ucc2}
\end{equation}
\end{proposition}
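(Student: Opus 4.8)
The plan is to recognize both identities as statements about interchanging an optimization over the mean parameter $\mu$ with an optimization over $P\in\cp_0$, after rewriting the sublinear expectations as suprema. Setting $f(\mu,P):=E_P[(X-\mu)^2]=E_P[X^2]-2\mu E_P[X]+\mu^2$, which is convex in $\mu$ and affine (hence both convex and concave) in $P$, we have $\overline{V}(X)=\min_{\lu\le\mu\le\ou}\sup_{P\in\cp_0}f(\mu,P)$ and, since $-\mathbb{E}^{\cp_0}[-(X-\mu)^2]=\inf_{P\in\cp_0}f(\mu,P)$, also $\underline{V}(X)=\min_{\lu\le\mu\le\ou}\inf_{P\in\cp_0}f(\mu,P)$. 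The elementary observation used throughout is that for fixed $P$ the unconstrained minimizer of $\mu\mapsto f(\mu,P)$ is $\mu=E_P[X]$, which lies in $[\lu,\ou]$ because $\lu=\inf_{P}E_P[X]$ and $\ou=\sup_{P}E_P[X]$; hence $\min_{\lu\le\mu\le\ou}f(\mu,P)=V_P(X)$ for every $P$.

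The lower-variance identity (\ref{ucc2}) is then immediate and requires no minimax argument, since two infima always commute:
\[
\underline{V}(X)=\min_{\lu\le\mu\le\ou}\inf_{P\in\cp_0}f(\mu,P)=\inf_{P\in\cp_0}\min_{\lu\le\mu\le\ou}f(\mu,P)=\inf_{P\in\cp_0}V_P(X),
\]
with attainment of the infimum (giving the $\min$) following from the weak compactness of $\cp_0$.

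For the upper-variance identity (\ref{ucc1}) the order is $\min_\mu\sup_P$, so a genuine minimax interchange is needed; the weak-duality inequality $\max_P V_P(X)=\sup_P\min_\mu f\le\min_\mu\sup_P f=\overline{V}(X)$ is automatic, and the content is the reverse inequality. A direct attempt to exhibit an optimal $\mu$ fails: choosing $\mu=E_{P^\ast}[X]$ for a variance-maximizing $P^\ast$ does not in general give $\sup_P f(\mu,P)\le V_{P^\ast}(X)$, because off-optimal measures contribute the extra term $(E_P[X]-\mu)^2$, so the saddle point is nontrivial and a minimax theorem is unavoidable. To deploy it cleanly despite $(X-\mu)^2$ being unbounded (so that $P\mapsto f(\mu,P)$ need not be weakly continuous), I would pass to the two-dimensional moment set $K:=\{(E_P[X],E_P[X^2]):P\in\cp_0\}\subset\br^2$. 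Since $P\mapsto(E_P[X],E_P[X^2])$ is affine, $K$ is convex, and $\mathbb{E}^{\cp_0}[|X|^2]<\infty$ together with Cauchy--Schwarz bounds $K$, so its closure $\bar K$ is compact and convex. On $\bar K$ the function $f$ depends on $P$ only through $(a,b)=(E_P[X],E_P[X^2])$, namely $\hat f(\mu,(a,b))=b-2\mu a+\mu^2$, which is affine in $(a,b)$, whence $\sup_{P}f(\mu,P)=\max_{(a,b)\in\bar K}\hat f$.

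Applying Sion's minimax theorem to $\hat f$ on the compact convex sets $[\lu,\ou]$ and $\bar K$ — where $\hat f(\cdot,(a,b))$ is continuous convex and $\hat f(\mu,\cdot)$ is continuous concave — gives
\[
\min_{\lu\le\mu\le\ou}\max_{(a,b)\in\bar K}\hat f=\max_{(a,b)\in\bar K}\min_{\lu\le\mu\le\ou}\hat f.
\]
Because every $(a,b)\in\bar K$ still satisfies $a\in[\lu,\ou]$, the inner minimization is attained at $\mu=a$ with value $b-a^2$, so $\overline{V}(X)=\max_{(a,b)\in\bar K}(b-a^2)$; finally, since $b-a^2$ is continuous and $K$ is dense in $\bar K$, one gets $\max_{\bar K}(b-a^2)=\sup_K(b-a^2)=\sup_{P}V_P(X)$, with attainment following from weak compactness, which proves (\ref{ucc1}). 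I expect the main obstacle to be precisely this interchange: verifying the minimax hypotheses in the presence of an unbounded integrand, which the reduction to the compact convex moment set $\bar K$ is designed to circumvent.
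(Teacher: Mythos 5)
Your proposal is correct in substance and follows essentially the route the paper points to: the paper does not prove Proposition \ref{prop34} at all, but cites Walley \cite{walley} and Li et al.\ \cite{lly}, and its accompanying remark states exactly your division of labor --- (\ref{ucc2}) needs no compactness or convexity (your observation that $\min_\mu\inf_P=\inf_P\min_\mu$ trivially), while (\ref{ucc1}) is where weak compactness and convexity enter through Sion's minimax theorem \cite{sion}. What you add beyond the paper's pointer is the reduction to the moment set $\bar K=\overline{\{(E_P[X],E_P[X^2]):P\in\cp_0\}}$, and this is a genuinely useful device: since $(X-\mu)^2$ is unbounded, $P\mapsto E_P[(X-\mu)^2]$ is in general only lower semicontinuous for the weak topology, so applying Sion directly on $[\lu,\ou]\times\cp_0$ would require an extra semicontinuity argument that your two-dimensional compactification sidesteps, with all hypotheses (compactness, convexity, continuity, convexity--concavity of $\hat f$) verified by inspection. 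Your preliminary observations are also right, including that the naive choice $\mu=E_{P^\ast}[X]$ fails and that $E_P[X]\in[\lu,\ou]$ forces $\min_{\lu\le\mu\le\ou}f(\mu,P)=V_P(X)$.

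There is, however, one soft spot: the parenthetical claims that attainment ``follows from weak compactness.'' For (\ref{ucc2}) this is fine, because $E_P[X^2]$ is weakly lower semicontinuous and $E_P[X]$ is weakly continuous on $\cp_0$ (first moments are uniformly integrable by the uniform second-moment bound), so $P\mapsto V_P(X)$ is lower semicontinuous and attains its minimum. But for (\ref{ucc1}) attainment needs \emph{upper} semicontinuity of $P\mapsto E_P[X^2]$, which weak compactness does not provide, and correspondingly $K$ need not be closed, so $\sup_K(b-a^2)$ may be strictly smaller than $\max_{\bar K}(b-a^2)$ as a statement about measures in $\cp_0$. Concretely, let $Q_k=(1-p_k)\delta_0+\frac{p_k}{2}(\delta_k+\delta_{-k})$ with $p_k=(1-1/k)/k^2$, and let $\cp_0$ be the closed convex hull of $\{Q_k\}_{k\ge1}\cup\{\delta_0\}$: this $\cp_0$ is weakly compact and convex with $\mathbb{E}^{\cp_0}[|X|^2]=1$, every $P\in\cp_0$ is a countable mixture with $V_P(X)=\sum_k\alpha_k(1-1/k)<1$, yet $\overline{V}(X)=\mathbb{E}^{\cp_0}[X^2]=1$, so the supremum is not attained. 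What your argument rigorously establishes is $\overline{V}(X)=\sup_{P\in\cp_0}V_P(X)$; to upgrade $\sup$ to $\max$ one needs uniform integrability of the second moments, i.e.\ precisely the paper's condition (\ref{c2}) (which fails in the example above, and which the paper does assume at the point where attainment is actually used, namely the construction of $\overline{P}$ and $\underline{P}$ in the proof of Theorem \ref{fclt}). Under (\ref{c2}), $P\mapsto E_P[X^2]$ is weakly continuous, $K$ is closed, and your concluding step is valid; you should state this hypothesis explicitly rather than attribute the attainment to weak compactness alone.
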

\begin{remark}
The weak compactness and convexity of $\cp_0$ ensure that the minimax theorem in Sion \cite{sion} can be applied to prove (\ref{ucc1}). For the general $\cp_0$ without such assumptions, (\ref{ucc2}) still holds but (\ref{ucc1}) becomes the following inequality:
$$\overline{V}(X)\geq\sup_{P\in\cp_0}V_P(X).$$
\end{remark}

Let $\cp$ be constructed on $(\br^\bn,\cb(\br^\bn))$ by (\ref{e1}) with $\cp_i=\cp_0$ for $i\in\bn$, where $\cp_0$ is weakly compact and convex. We further assume that $\cp_0$ satisfies
\begin{equation}
\lim_{\lambda\rightarrow\infty}\mathbb{E}^{\cp_0}[(|X|^{2}-\lambda)^+]=0,\label{c2}
\end{equation}
where $X(\omega)=\omega, \ \forall \omega\in\br$.

Let $\{X_i\}_{i\in\bn}$ be the  canonical process on the canonical space $(\br^\bn,\cb(\br^\bn))$ with $X_i(\omega)=\omega_i, \ \forall \omega=(\omega_1,\cdots,\omega_n,\cdots)\in\br^\bn$. Then $\{X_i\}_{i\in\bn}$ is an i.i.d. sequence under $\cp$. The natural filtration $\{\cf_i\}_{i\in\bn}$ is defined by $\cf_i=\sigma(X_1,\cdots,X_i)$ with convention $\cf_0=\{\emptyset,\br^\bn\}$.

The following proposition is important to prove the functional CLT.

\begin{proposition}\label{lem1}
For each $P\in\cp$, let $\tilde{X}^P_i=X_i-E_P[X_i|\cf_{i-1}]$, then
\begin{equation}\label{conditional1}
\underline{V}(X_1)\leq E_P[|\tilde{X}^P_i|^2|\cf_{i-1}]\leq\overline{V}(X_1),\ \   P-\text{a.s.},  \  \ i\in \bn,
\end{equation}
%Furthermore, for fixed $n\in\bn$, let $V_i(x_1,\cdots, x_{i-1})$ be $\cf_{i-1}$-measurable function for $2\leq i\leq n$ and $V_1(x_0)$ be a constant satisfying
% $$\underline{V}(X_1)\leq V_i(x_1,\cdots,x_{i-1})\leq\overline{V}(X_1), 1\leq i\leq n,$$
%then there exists $P^*\in\cp$ such that, $\forall \omega=(x_1,\cdots,x_n,\cdots)\in\br^\bn$,
%\begin{equation}\label{cvv14}
%E_{P^*}[|\tilde{X}^{P^*}_i|^2|\cf_{i-1}](\omega)=V_i(x_1,\cdots,x_{i-1}),\ \  P^*-\text{a.s.}, \ \ 1\leq i\leq n.
%\end{equation}
\end{proposition}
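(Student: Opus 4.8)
The plan is to identify, for each fixed $P\in\cp$, the regular conditional distribution of $X_i$ given $\cf_{i-1}$ with the probability kernel $\kappa_i$ appearing in the construction (\ref{e1}), and thereby reduce the conditional second moment $E_P[|\tilde{X}^P_i|^2\mid\cf_{i-1}]$ to the ordinary variance of $X$ under a measure lying in $\cp_0$. Once this reduction is carried out, the two bounds follow at once from Proposition \ref{prop34}. Concretely, exactly as in the proof of Proposition \ref{prop23}, the regularity of conditional probability yields, for $P$-a.e. $\omega=(x_1,\cdots,x_n,\cdots)\in\br^\bn$, that the conditional law of $X_i$ given $\cf_{i-1}$ is the kernel $Q:=\kappa_i(x_1,\cdots,x_{i-1},\cdot)$, which belongs to $\cp_i=\cp_0$.

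First I would record the relevant integrability. In the present i.i.d. setting condition (\ref{cond11}) specializes to (\ref{c2}), and the latter gives $\sup_{Q'\in\cp_0}E_{Q'}[|X|^2]<\infty$, since for every $\lambda$ one has $E_{Q'}[|X|^2]\leq\lambda+E_{Q'}[(|X|^2-\lambda)^+]\leq\lambda+\mathbb{E}^{\cp_0}[(|X|^2-\lambda)^+]$, and choosing $\lambda$ so that the last term is at most $1$ bounds the left side uniformly in $Q'$. Consequently both $x\mapsto x$ and $x\mapsto x^2$ are integrable under $\kappa_i(x_1,\cdots,x_{i-1},\cdot)$, and by the Remark following Proposition \ref{prop23} the kernel representation $E_P[\vp(X_i)\mid\cf_{i-1}](\omega)=E_{\kappa_i(x_1,\cdots,x_{i-1},\cdot)}[\vp(X)]$ extends from $\vp\in C_b(\br)$ to continuous $\vp$ of quadratic growth.

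Then I would expand the conditional second moment. Since $\tilde{X}^P_i=X_i-E_P[X_i\mid\cf_{i-1}]$ and $E_P[X_i\mid\cf_{i-1}](\omega)=E_Q[X]$ is $\cf_{i-1}$-measurable, a direct expansion gives
$$E_P[|\tilde{X}^P_i|^2\mid\cf_{i-1}](\omega)=E_Q[X^2]-(E_Q[X])^2=V_Q(X),$$
the classical variance of $X$ under $Q$. As $Q\in\cp_0$, the identities (\ref{ucc1}) and (\ref{ucc2}) of Proposition \ref{prop34} yield
$$\underline{V}(X_1)=\min_{Q'\in\cp_0}V_{Q'}(X)\leq V_Q(X)\leq\max_{Q'\in\cp_0}V_{Q'}(X)=\overline{V}(X_1),$$
where we used that the marginal uncertainty of $X_1$ is precisely $\cp_0$, so that $\underline{V}(X_1)=\underline{V}(X)$ and $\overline{V}(X_1)=\overline{V}(X)$ in the sense of Definition \ref{uvc}. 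This is exactly the assertion (\ref{conditional1}).

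The only place where genuine care is needed is the passage from bounded continuous test functions to the quadratic functions $x\mapsto x$ and $x\mapsto x^2$ in the kernel representation of the conditional expectation, which is precisely where the uniform integrability hypothesis (\ref{c2}) is used; the remaining steps are a routine expansion and an appeal to Proposition \ref{prop34}. I would also note that the $P$-null exceptional set on which the kernel representation may fail depends on the chosen $P$, but since (\ref{conditional1}) is required to hold only $P$-a.s. for each fixed $P\in\cp$, this dependence causes no difficulty.
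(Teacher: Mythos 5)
Your proof is correct and follows essentially the same route as the paper's: identify the $P$-conditional law of $X_i$ given $\cf_{i-1}$ with the kernel $\kappa_i(x_1,\cdots,x_{i-1},\cdot)\in\cp_0$ (with $\mu_1$ playing this role for $i=1$), reduce $E_P[|\tilde{X}^P_i|^2\,|\,\cf_{i-1}]$ to the classical variance $V_{\kappa_i(x_1,\cdots,x_{i-1},\cdot)}(X_i)$, and conclude by Proposition \ref{prop34}. The paper asserts this reduction in one line, so your explicit integrability argument via (\ref{c2}) and the extension of the kernel representation to quadratic-growth test functions merely fill in steps the paper leaves implicit.
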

\begin{proof}
If $i=1$, then by Proposition \ref{prop34}, we have
$$\underline{V}(X_1)\leq E_P[(X_1-E_P[X_1])^2]\leq\overline{V}(X_1), \ \ \ \forall P\in\cp.$$

For fixed $i\geq 2$, then $\forall P\in\cp$ and $\omega=(x_1,\cdots,x_{i-1},\cdots)\in\br^\bn$,
\begin{align*}
E_P[|\tilde{X}^P_i|^2|\cf_{i-1}](\omega)=V_{\kappa_i(x_1,\cdots,{x}_{i-1},\ \cdot)}(X_i),\ P-\text{a.s.}.
\end{align*}

Since $\kappa_i(x_1,\cdots,x_{i-1},\cdot)\in\cp_0$,  thanks to Proposition \ref{prop34},  (\ref{conditional1}) holds.

\end{proof}

\subsection{Functional central limit theorem with mean-uncertainty}
In order to obtain the functional  CLT, we need to extend each discrete path $x\in\br^n$ to a continuous path $\hat{x}\in\Omega_0$, where the interpolation operator $\hat{\ }: \br^{n}\rightarrow C([0,1])$ is defined as
$$x=(x_1, \cdots, x_n)\mapsto \hat{x}=(\hat{x}_t)_{0\leq t\leq 1},$$
where $\hat{x}_t:=([nt]+1-nt)x_{[nt]}+(nt-[nt])x_{[nt]+1}$ with $x_0=0$.

With above notations, we have the functional CLT with mean-uncertainty on the canonical space $(\br^\bn,\cb(\br^\bn))$.

\begin{theorem}\label{fclt}
For each $P\in\cp$, let $S_P^{(n)}=(S^P_i)_{1\leq i\leq n}$ with $S^P_i=\frac{1}{\sqrt{n}}\sum_{j=1}^i(X_j-E_P[X_j|\cf_{j-1}])$. Then for each continuous function $\vp:C([0,1])\rightarrow\br$ satisfying $|\vp(\omega)|\leq C(1+||\omega||_\infty)$ for some constant $C>0$, we have
\begin{equation}\label{clt0}
\lim_{n\rightarrow\infty}\sup_{P\in\cp}E_P\lt[\vp\lt(\hat{S_P^{(n)}}\rt)\rt]=\mathbb{E}^{\cp_\Theta}[\vp],
\end{equation}
where $\Theta=[\underline{V}(X_1),\overline{V}(X_1)]$.
\end{theorem}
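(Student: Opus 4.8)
The plan is to establish the two one-sided inequalities
$$\limsup_{n\to\infty}\sup_{P\in\cp}E_P\lt[\vp\lt(\hat{S_P^{(n)}}\rt)\rt]\le\mathbb{E}^{\cp_\Theta}[\vp]\qquad\text{and}\qquad\liminf_{n\to\infty}\sup_{P\in\cp}E_P\lt[\vp\lt(\hat{S_P^{(n)}}\rt)\rt]\ge\mathbb{E}^{\cp_\Theta}[\vp],$$
exploiting the two representations $\mathbb{E}^{\cp_\Theta}=\mathbb{E}^{\mathcal{Q}_\Theta}$ of the $G$-expectation recalled above: the martingale representation $\cp_\Theta$ drives the upper bound, and the stochastic-integral representation $\mathcal{Q}_\Theta$ drives the lower bound. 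The common structural input is that, for every $P\in\cp$, the rescaled increments $\tfrac{1}{\sqrt n}\tilde{X}_j^P$ form a martingale difference array under $P$ whose conditional second moments are pinned to $\Theta$: by Proposition \ref{lem1}, $\tfrac{1}{n}\underline{V}(X_1)\le E_P[\,|\tfrac{1}{\sqrt n}\tilde{X}_j^P|^2\mid\cf_{j-1}]\le\tfrac{1}{n}\overline{V}(X_1)$ a.s. Throughout, the uniform integrability condition (\ref{c2}) is used to produce a Lindeberg condition for this array (since each kernel lies in $\cp_0$, so $E_P[(|X_j|^2-\lambda)^+\mid\cf_{j-1}]\le\mathbb{E}^{\cp_0}[(|X|^2-\lambda)^+]$) together with the $L^2$-bounds needed to upgrade weak convergence to convergence of expectations of the linearly growing functional $\vp$.

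For the upper bound, fix a sequence $P_n\in\cp$ that is $1/n$-optimal for $\sup_{P\in\cp}E_P[\vp(\hat{S_P^{(n)}})]$. Writing $A^n$ for the continuous interpolation of the discrete bracket $\tfrac{1}{n}\sum_{j\le i}E_{P_n}[|\tilde{X}_j^{P_n}|^2\mid\cf_{j-1}]$, the bound (\ref{conditional1}) shows that $A^n$ is nondecreasing with time-derivative in $\Theta$, so the pair $(\hat{S_{P_n}^{(n)}},A^n)$ is tight. Along any subsequence one extracts a weak limit $(M,A)$; the functional martingale central limit theorem, fed by the Lindeberg condition from (\ref{c2}), identifies $M$ as a continuous martingale with $\langle M\rangle=A$ and $\tfrac{dA_t}{dt}\in\Theta$, so that the law $P^\ast$ of $M$ belongs to $\cp_\Theta$. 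Since $\sup_n E_{P_n}[\|\hat{S_{P_n}^{(n)}}\|_\infty^2]<\infty$ by Doob's inequality and the variance bound, $\vp(\hat{S_{P_n}^{(n)}})$ is uniformly integrable, and weak convergence yields $E_{P_n}[\vp(\hat{S_{P_n}^{(n)}})]\to E_{P^\ast}[\vp]\le\mathbb{E}^{\cp_\Theta}[\vp]$. As the subsequence was arbitrary, the $\limsup$ inequality follows.

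For the lower bound, fix $\ve>0$ and use $\mathbb{E}^{\cp_\Theta}[\vp]=\mathbb{E}^{\mathcal{Q}_\Theta}[\vp]$ to pick $f\in\mathcal{A}_\Theta$ with $E_{P_W}[\vp(\int_0^\cdot f(t,B)\,dB_t)]\ge\mathbb{E}^{\mathcal{Q}_\Theta}[\vp]-\ve$. Because $\cp_0$ is convex and weakly compact and $P\mapsto V_P(X)=E_P[X^2]-(E_P[X])^2$ is continuous and concave, Proposition \ref{prop34} shows that its range is exactly the interval $\Theta=[\underline{V}(X_1),\overline{V}(X_1)]$; hence for each target $v\in\Theta$ there is $Q_v\in\cp_0$ with $V_{Q_v}(X)=v$, chosen measurably in $v$. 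Using the adapted integrand $f$ evaluated along the already-generated path, I would select the kernels $\kappa_j$ in the construction (\ref{e1}) so that the conditional variance of the $j$-th increment matches $f^2$ at the corresponding time and node, producing a measure $P_n\in\cp$ whose martingale $\hat{S_{P_n}^{(n)}}$ has the prescribed local variance profile. The stability of the stochastic integral then gives $E_{P_n}[\vp(\hat{S_{P_n}^{(n)}})]\to E_{P_W}[\vp(\int_0^\cdot f(t,B)\,dB_t)]$, so that $\liminf_n\sup_{P\in\cp}E_P[\vp(\hat{S_P^{(n)}})]\ge\mathbb{E}^{\mathcal{Q}_\Theta}[\vp]-\ve$, and letting $\ve\downarrow0$ closes the gap.

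The main obstacle is the lower-bound construction: translating a path-dependent integrand $f\in\mathcal{A}_\Theta$ into admissible kernels so that the interpolated discrete martingale converges to the It\^o integral $\int_0^\cdot f(t,B)\,dB_t$ under $P_W$. This requires both a measurable selection $v\mapsto Q_v$ matching variances while absorbing the mean-uncertainty through the recentering $\tilde{X}_j^{P_n}=X_j-E_{P_n}[X_j\mid\cf_{j-1}]$, and a quantitative stability estimate for stochastic integrals controlling the error between the discrete conditional-variance profile and the continuous integrand $f$; the Lindeberg control from (\ref{c2}) is exactly what makes the discrete increments asymptotically negligible and keeps this approximation uniform. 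The upper bound, by contrast, is comparatively routine once tightness and the functional martingale CLT are in place.
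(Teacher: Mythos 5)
Your proposal is correct and follows essentially the same route as the paper: the same two one-sided inequalities, with the upper bound via near-optimal $P_n$, tightness from (\ref{c2}), and identification of cluster points as laws in $\cp_\Theta$ (the paper pins the bracket's derivative to $\Theta$ by a law-of-large-numbers argument, which is the substance behind your compensator/martingale-CLT identification), and the lower bound via the $\mathcal{Q}_\Theta$ representation, kernels built from variance-extremal measures in $\cp_0$ matching the profile $\bar f^2$ (the paper uses an explicit two-point mixture of $\overline{P}$ and $\underline{P}$ rather than your measurable selection), Brown's martingale CLT, and Kurtz--Protter stability, with the final passage from $C_{b.Lip}$ to linear growth by the same Doob $L^2$ bound. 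No genuine gaps beyond what the paper itself leaves at the same level of detail.
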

\begin{proof}
We firstly prove that (\ref{clt0}) holds for $\vp\in C_{b.Lip}(\Omega_0)$, where $C_{b.Lip}(\Omega_0)$ is the space of all bounded and Lipschitz continuous functions on $\Omega_0$.

Let us consider the first inequality
\begin{equation}
\limsup_{n\rightarrow\infty}\sup_{P\in\cp}E_P\lt[\vp\lt(\hat{S_P^{(n)}}\rt)\rt]\leq\mathbb{E}^{\cp_\Theta}[\vp].\label{iq1}
\end{equation}
For fixed $\vp\in C_{b.Lip}(\Omega_0)$ and $\ve>0$, there exists $P^{(n)}\in\cp$ such that, for each $n\in\bn$,
$$E_{P^{(n)}}\lt[\vp\lt(\hat{S_{P^{(n)}}^{(n)}}\rt)\rt]\geq\sup_{P\in\cp}E_P\lt[\vp\lt(\hat{S_P^{(n)}}\rt)\rt]-\ve.$$

Let $Q^{(n)}$ be the law of $\hat{S_{P^{(n)}}^{(n)}}$ introduced by probability measure $P^{(n)}\in\cp$, i.e.,
$$E_{Q^{(n)}}[\vp(B)]=E_{P^{(n)}}\lt[\vp\lt(\hat{S_{P^{(n)}}^{(n)}}\rt)\rt], \ \ \forall \vp\in C_b(\Omega_0).$$
The sequence $\{Q^{(n)}\}$ is tight on $C[0,1]$ by condition (\ref{c2}) (see Zhang \cite{Z2015}). Let $Q$ be a cluster point of $Q^{(n)}$, then the canonical process $B$ is a $Q$-martingale.

Let $M^{(n)}(t)=\frac{1}{\sqrt{n}}\sum_{i=0}^{[nt]}\tilde{X}_i^{P^{(n)}}$, where $\tilde{X}_i^{P^{(n)}}=X_i-E_{P^{(n)}}[X_i|\cf_{i-1}]$ and $X_0=0$. The law of $M^{(n)}$ on the space $D[0,1]$ of c\`{a}dl\`{a}g paths is denoted by $\tilde{Q}^{(n)}$. We note that
$$\lt|E_{P^{(n)}}\lt[\vp\lt(\hat{S_{P^{(n)}}^{(n)}}\rt)\rt]-E_{P^{(n)}}[\vp(M^{(n)})]\rt|\leq\frac{L_\vp}{\sqrt{n}}E_{P^{(n)}}[|\tX^{P^{(n)}}_i|]\leq\frac{2L_\vp}{\sqrt{n}}\mathbb{E}^{\cp_0}[|X_1|]\rightarrow 0,$$
where $L_\vp$ is the Lipschitz constant of $\vp$. Thus $Q$ is also the cluster point of $\tilde{Q}^{(n)}$.

We have
$$\langle M^{(n)}\rangle_t=\frac{1}{n}\sum_{i=0}^{[nt]}|\tilde{X}^P_i|^2,$$
and define $d_\Theta(x)=\inf_{y\in\Theta}|y-x|$.

Since $\underline{V}(X_1)\leq E_{Q^{(n)}}[|\tilde{X}^P_i|^2|\mathcal{F}_{i-1}]\leq\overline{V}(X_1)$ by Proposition \ref{lem1}, the similar argument of the law of large numbers  (see Appendix) shows that, for any $0\leq s<t\leq 1$,

$$E_{\tilde{Q}^{(n)}}\lt[d_\Theta\lt(\frac{\langle M^{(n)}\rangle_t-\langle M^{(n)}\rangle_s}{t-s}\rt)\rt]\rightarrow 0,$$
which implies that
$$E_{Q}\lt[d_\Theta\lt(\frac{\langle B\rangle_t-\langle B\rangle_s}{t-s}\rt)\rt]=0,$$
thus $Q\in \cp_\Theta$ and (\ref{iq1}) holds.

Now we consider the second inequality
\begin{equation}\label{iq2}
\liminf_{n\rightarrow\infty}\sup_{P\in\cp}E_P[\vp(\hat{S_P^{(n)}})]\geq\mathbb{E}^{\cp_\Theta}[\vp].
\end{equation}
Let $\vp\in C_{b.Lip}(\Omega_0)$ be fixed. For each $\ve>0$, there exists $\bar{f}\in \mathcal{A}_{\Theta}$, such that
$$E_{P_W}\lt[\vp\lt(\int\bar{f}(t,W)dW_t\rt)\rt]\geq \mathbb{E}^{\mathcal{Q}_\Theta}[\vp]-\ve.$$
Without loss of generality, we further assume that $\bar{f}>0$.

By Proposition \ref{prop34}, we can find probability measures $\overline{P}$ and $\underline{P}$ on $(\br,\mathcal{B}(\br))$ such that $V_{\overline{P}}(X_1)=\overline{V}(X_1)$ and $V_{\underline{P}}(X^n_1)=\underline{V}(X_1)$ respectively.

For fixed $n\in\bn$, since $\bar{f}(0,\cdot)\in\sqrt{\Theta}$ is a constant,
 we define, $\forall B\in\mathcal{B}(\br)$, if $\overline{V}(X_1)>\underline{V}(X_1)$,
$$\mu_1(B)=\frac{\overline{V}(X_1)-\bar{f}(0,\cdot)^2}{\overline{V}(X_1)-\underline{V}(X_1)}\underline{P}(B)+\frac{\bar{f}(0,\cdot)^2-\underline{V}(X_1)}{\overline{V}(X_i)-\underline{V}(X_1)}\overline{P}(B),$$
otherwise, $\mu_1(B)=\overline{P}(B)$.

There exists $P_*\in\cp$ such that
\begin{equation}\label{ce}
{E_{P_*}[|X_1-E_{P_*}[X_1]|^2]}=\bar{f}\lt(0, \cdot\rt)^2\subset \Theta.
\end{equation}
We then define
$$Y_1=\frac{X_1-E_{P_*}[X_1]}{\sqrt{E_{P_*}[|X_1-E_{P_*}[X_1]|^2]}}.$$

Let $W^{(n)}$ be defined recursively by
$$W^{(n)}_t=\frac{1}{\sqrt{n}}\sum_{j=0}^{[nt]}Y_j,$$
where $Y_0=0$, and for $j\geq 1$,
$$Y_j=\frac{X_j-E_{P_*}[X_j|\cf_{j-1}]}{\sqrt{E_{P_*}[|X_j-E_{P_*}[X_j|\cf_{j-1}]|^2|\cf_{j-1}]}}.$$

Indeed, for $j\geq 2$, we note that $\bar{f}(\frac{j-1}{n},\hat{W^{(n)}})$  depends on $\{Y_i\}_{0\leq i\leq j-1}$ by the adaptability of $\bar{f}$. We define, $\forall B\in\cb(\br)$, if $\overline{V}(X_1)>\underline{V}(X_1)$,
\begin{align*}\kappa_j(x_1,\cdots,x_{j-1},B)=&\frac{\overline{V}(X_1)-\bar{f}(\frac{j-1}{n},\hat{W^{(n)}})^2}{\overline{V}(X_1)-\underline{V}(X_1)}\underline{P}(B)\\
&+\frac{\bar{f}(\frac{j-1}{n},\hat{W^{(n)}})^2-\underline{V}(X_1)}{\overline{V}(X_1)-\underline{V}(X_1)}\overline{P}(B),
\end{align*}
otherwise, $\kappa_j(x_1,\cdots,x_{j-1},B)=\overline{P}(B)$.

$P_*$ can be formulated by, $\forall A\in\br^n$,
$$P_*(A\times\br^{\bn-n})=\int_\br\mu_1(dx_1)\int_\br\kappa_2(x_1,dx_2) \cdots\int_\br I_A\kappa_n(x_1,\cdots,x_{n-1},dx_n).$$

We can verify that for $1\leq i\leq n$,
\begin{equation*}
{E_{P_*}[|X_i-E_{P_*}[X_i|\cf_{i-1}]|^2|\mathcal{F}_{i-1}]}=\bar{f}\lt((i-1)/n, \hat{W^{(n)}}\rt)^2\subset \Theta.
\end{equation*}

It is clear that $E_{P_*}[Y_j|\cf_{j-1}]=0$ and $E_{P_*}[Y_j^2|\cf_{j-1}]=1$, $j\geq 1$.
By the martingale central limit theorem in Brown \cite{brown}, on the space of c\`{a}dl\`{a}g paths $D([0,1],\br^2)$ equipped with the Skorohod topology,
$$\lt(W^{(n)},\hat{W^{(n)}}\rt)\Rightarrow(W,W),$$
where $W$ is the Brownian motion.

We note that
\begin{align*}
S^{P_*}_i&=\frac{1}{\sqrt{n}}\sum_{j=1}^i(X_j-E_{P_*}[X_j|\cf_{j-1}])\\
&=\sum_{j=1}^i\bar{f}\lt(\frac{j-1}{n}, \hat{W^{(n)}}\rt)\lt(W_{\frac{j+1}{n}}^{(n)}-W_{\frac{j}{n}}^{(n)}\rt).
\end{align*}
Then it follows Dolinsky et al. \cite{DNS} and the stability of stochastic integrals (see Kurtz and Protter \cite{KP}), on $D(0,1)$,
$$\hat{S_P^{(n)}}\Rightarrow \int\bar{f}(t,W)dW_t.$$
 Finally, we obtain
\begin{align*}
\liminf_{n\rightarrow\infty}\sup_{P\in\cp}E_P\lt[\vp\lt(\hat{S_P^{(n)}}\rt)\rt]&\geq\liminf_{n\rightarrow\infty}E_{P_*}\lt[\vp\lt(\hat{S_P^{(n)}}\rt)\rt]\\
&\geq E_{P_W}\lt[\vp\lt(\int_0^1\bar{f}(t,W)dW_t\rt)\rt]\geq \mathbb{E}^{\mathcal{Q}_\Theta}[\vp]-\ve.
\end{align*}
Thus (\ref{clt0}) holds for $\vp\in C_{b.Lip}(\Omega_0)$.

For fixed $n\in\bn$, by Doob's martingale inequality, we have
$$E_{P}[\sup_{0\leq t\leq 1}|M^{(n)}(t)|^2]\leq 4\overline{V}(X_1), \ \ \ \forall P\in\cp,$$
which implies that
$$\sup_{P\in\cp}E_P\lt[\sup_{0\leq t\leq 1}\lt|\hat{S_P^{(n)}}(t)\rt|^2\rt]\leq 4\overline{V}(X_1)+2\mathbb{E}^{\cp}[|X_1|^2].$$
By the similar arguments as Lemma 2.4.12 in Peng \cite{P2019}, (\ref{clt0}) also holds for continuous function $\vp$ with linear growth condition.
\end{proof}

%\begin{theorem}
%\label{fclt0}
%For each continuous function $\vp:C([0,1])\rightarrow\br$ satisfying $|\vp(\omega)|\leq C(1+||\omega||_\infty)$ for some constant $C>0$, we have
%\begin{equation}\label{conv}
%\lim_{n\rightarrow\infty}\mathbb{E}^{\cp}[\vp(\hat{S^{(n)}})]=\mathbb{E}^{\cp_\Theta}[\vp],
%\end{equation}
%where $\Theta=[\ls^2,\os^2]$ with $\os^2=\mathbb{E}^{\cp_0}[|X_0|^2]$ and $\ls^2=-\mathbb{E}^{\cp_0}[-|X_0|^2]$.
%\end{theorem}
\begin{remark}
The first inequality (\ref{iq1}) can also be proved by the similar argument in \cite{DNS}. For the second inequality (\ref{iq2}), the corresponding proof in \cite{DNS} can not be applied directly, since the discrete-time models in \cite{DNS} do not have independence property.
\end{remark}

\section{Functional central limit theorem on sublinear expectation space}

In this section, we extend  functional CLT to the general sublinear expectation space $(\Omega,\mathcal{H},\be)$ introduced by Peng \cite{P2019}.
\subsection{Basic notions of sublinear expectation theory}
Let $\Omega$ be a given set and let $\mathcal{H}$ be a linear space of real
functions defined on $\Omega$ such that if
$X_{1},\ldots,X_{n}\in \mathcal{H}$, then $\varphi(X_{1},\cdots,X_{n}%
)\in \mathcal{H}$ for each $\varphi \in C_{Lip}(\mathbb{R}^{n})$, where
$C_{Lip}(\mathbb{R}^{n})$ denotes the space of all Lipschitz
functions on $\mathbb{R}^{n}$. $\mathcal{H}$ is considered as the space of
random variables.
$X=(X_{1},\ldots,X_{n})$, $X_{i}\in \mathcal{H}$, is called
an $n$-dimensional random vector.

\begin{definition}\label{sub}
A sublinear expectation $\hat{\mathbb{E}}$ on $\mathcal{H}$ is a functional $\hat{\mathbb{{E}}}:\mathcal{H}\rightarrow \mathbb{R}$ satisfying the following
properties: for all $X,Y\in \mathcal{H}$, we have
\begin{description}
\item[(a)] Monotonicity: $\mathbb{\hat{E}}[X]\geq \mathbb{\hat{E}}[Y]$ if
$X\geq Y$.

\item[(b)] Constant preserving: $\mathbb{\hat{E}}[c]=c$ for $c\in \mathbb{R}$.

\item[(c)] Sub-additivity: $\mathbb{\hat{E}}[X+Y]\leq \mathbb{\hat{E}%
}[X]+\mathbb{\hat{E}}[Y]$.

\item[(d)] Positive homogeneity: $\mathbb{\hat{E}}[\lambda X]=\lambda
\mathbb{\hat{E}}[X]$ for $\lambda \geq0$.
\end{description}

The triple $(\Omega,\mathcal{H},\mathbb{\hat{E}})$ is called a sublinear
expectation space.
\end{definition}

%Denote by $\mathcal{\hat{H}}$ the completion of $\mathcal{H}$ under the norm
%$||X||:=\mathbb{\hat{E}}[|X|]$. Noting that $|\mathbb{\hat{E}}[X]-\mathbb{\hat
%{E}}[Y]|\leq \mathbb{\hat{E}}[|X-Y|]$, $\mathbb{\hat{E}}[\cdot]$ can be
%continuously extended to $\mathcal{\hat{H}}$. One can check that
%$(\Omega,\mathcal{\hat{H}},\mathbb{\hat{E}})$ is still a sublinear expectation
%space, which is called a complete sublinear expectation space. In the
%following, we always suppose that $(\Omega,\mathcal{H},\mathbb{\hat{E}})$ is
%complete.

Let $X=(X_1,\cdots,X_n)$ be a given $n$-dimensional random vector on a sublinear expectation space $(\Omega,\mathcal{H},\mathbb{\hat{E}})$. We define a functional on $C_{b.Lip}(\br^n)$ by
$$\mathbb{\hat{F}}_X[\vp]:=\mathbb{\hat{E}}[\vp(X)], \ \ \forall \vp\in C_{b.Lip}(\br^n).$$
The triple $(\br^n, C_{b.Lip}(\br^n),\mathbb{\hat{F}}_X[\cdot])$ forms a sublinear expectation space, and $\mathbb{\hat{F}}_X$ is called the sublinear distribution of $X$.

\begin{definition}\label{id}
Let $X$ and $Y$ be two random variables on $(\Omega,\mathcal{H},\mathbb{\hat
{E}})$. $X$ and $Y$ are called identically distributed, denoted by
$X\overset{d}{=}Y$, if for each $\varphi \in C_{b.Lip}(\mathbb{R})$,
\[
\mathbb{\hat{E}}[\varphi(X)]=\mathbb{\hat{E}}[\varphi(Y)].
\]

\end{definition}

%We slightly modify the definition of independence in Peng \cite{P2019}. It is obvious that Peng's independence guarantees the independence below.
\begin{definition}
\label{new-de1}Let $\left \{  X_{n}\right \}  _{n\in\bn}$ be a sequence of
random variables on $(\Omega,\mathcal{H},\mathbb{\hat{E}})$. $X_{n}$ is said
to be independent of $\left(  X_{1},\ldots,X_{n-1}\right)  $ under
$\mathbb{\hat{E}}$, if for each $\vp\in C_{b.Lip}(\br^{n})$,
$$
\mathbb{\hat{E}}\left[\varphi\left(X_{1}, \cdots, X_{n}\right)\right]=\mathbb{\hat{E}}\left[\left.\mathbb{\hat{E}}\left[\varphi\left(x_{1}, \cdots, x_{n-1},X_n\right)\right]\right|_{\left(x_{1}, \cdots, x_{n-1}\right)=\left(X_{1}, \cdots, X_{n-1}\right)}\right].
$$
The sequence of random variables $\left \{  X_{n}\right \}  _{n\in\bn}$ is
said to be independent, if $X_{n+1}$ is independent of $\left(  X_{1}%
,\ldots,X_{n}\right)  $ for each $n\geq1$.
\end{definition}

\begin{remark}
This definition of independence is more general than the classical one, since we do not require the regularity of $\be$ (see Definition \ref{reg}). Example \ref{ex1} illustrates that we can not construct i.i.d. sequence on the classical probability space, but we can do it on the sublinear expectation space.
\end{remark}

The following proposition gives the links between the i.i.d. sequence on sublinear expectation space and that on canonical space.
\begin{proposition}\label{pp1}
Let $\{X_i\}_{i\in\bn}$ be an i.i.d. sequence on sublinear expectation space $(\Omega,\mathcal{H},\be)$. Then there exists a sequence of i.i.d. canonical random variables  $\{Y_i\}_{i\in\bn}$ on canonical space $(\br^\bn,\cb(\br^\bn))$ associated with the set of probability measures $\cp$ such that for each $n\in\bn$  and $\vp\in C_{b.Lip}(\br^n)$,
$$\be[\vp(X_1,\cdots,X_n)]=\mathbb{E}^{\cp}[\vp(Y_1,\cdots,Y_n)].$$
\end{proposition}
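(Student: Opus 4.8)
The plan is to construct the kernel data on $(\br^\bn,\cb(\br^\bn))$ that encodes the sublinear distribution of $\{X_i\}$ and then verify the two required structural properties: that the induced $\{Y_i\}$ is i.i.d. in the canonical-space sense of \eqref{e1}, and that finite-dimensional sublinear distributions match. The starting point is the sublinear distribution $\widehat{\F}_{X_1}$ of a single $X_1$ on $(\br,C_{b.Lip}(\br))$. By the representation theorem for sublinear expectations (Denis et al. \cite{DHP11}, or the version in Peng \cite{P2019}), the functional $\vp\mapsto\be[\vp(X_1)]$ on $C_{b.Lip}(\br)$ is represented as $\sup_{P\in\cp_0}E_P[\vp]$ for a weakly compact and convex set $\cp_0$ of probability measures on $(\br,\cb(\br))$. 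I would take this $\cp_0$ as the single building block $\cp_1=\cp_0$ and set all $\cp_i=\cp_0$, which is legitimate precisely because the sequence is identically distributed. The set $\cp$ is then built from $\cp_0$ by \eqref{e1}, and $\{Y_i\}$ with $Y_i(\omega)=\omega_i$ is an i.i.d. sequence under $\cp$ by the discussion following \eqref{e1}.

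Next I would establish the matching of sublinear distributions by induction on $n$. The base case $n=1$ is exactly the defining property of $\cp_0$: $\be[\vp(X_1)]=\sup_{P\in\cp_0}E_P[\vp]=\pe[\vp(Y_1)]$ for $\vp\in C_{b.Lip}(\br)$. For the inductive step, I would exploit the independence structure on both sides. On the sublinear side, Definition \ref{new-de1} gives
$$\be[\vp(X_1,\cdots,X_n)]=\be\lt[\psi(X_1,\cdots,X_{n-1})\rt],$$
where $\psi(x_1,\cdots,x_{n-1}):=\be[\vp(x_1,\cdots,x_{n-1},X_n)]=\sup_{Q\in\cp_0}E_Q[\vp(x_1,\cdots,x_{n-1},\cdot)]$, using that $X_n\overset{d}{=}X_1$ has sublinear distribution $\cp_0$. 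The key point is that $\psi\in C_{b.Lip}(\br^{n-1})$, so the inductive hypothesis applies to it. On the canonical side, the kernel decomposition \eqref{e1} together with Proposition \ref{prop23} shows that $\pe[\vp(Y_1,\cdots,Y_n)]$ is obtained by the identical nested supremum: the innermost integration over $\kappa_n(x_1,\cdots,x_{n-1},\cdot)\in\cp_0$ produces exactly $\psi(x_1,\cdots,x_{n-1})$ after taking the supremum, since $\pe$ is realized by maximizing each kernel independently over $\cp_0$. Matching the two nested representations term by term closes the induction.

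The main obstacle I anticipate is the interchange of supremum and integration needed to identify the canonical-space nested supremum with the sublinear nested expectation — concretely, justifying that
$$\sup_{P\in\cp}E_P[\vp(Y_1,\cdots,Y_n)]=\sup_{\mu_1\in\cp_0}E_{\mu_1}\lt[\sup_{\kappa_2\in\cp_0}E_{\kappa_2}\lt[\cdots\sup_{\kappa_n\in\cp_0}E_{\kappa_n}[\vp]\rt]\rt],$$
i.e., that choosing the kernels greedily, fibrewise over $\cp_0$, attains the global supremum over $\cp$. This is where the measurable-selection and weak-compactness hypotheses do the work: one needs that the map $(x_1,\cdots,x_{n-1})\mapsto\sup_{\kappa_n\in\cp_0}E_{\kappa_n}[\vp(x_1,\cdots,x_{n-1},\cdot)]$ is a Borel (indeed $C_{b.Lip}$) function so that it can serve as an integrand at the next level, and that near-optimal kernels can be selected measurably to build an element of $\cp$ approximating the supremum. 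For bounded Lipschitz $\vp$ the regularity of $\psi$ is elementary, and the weak compactness of $\cp_0$ guarantees the suprema are attained, so the selection is manageable; nonetheless this fibrewise-optimality lemma is the technical heart and I would state and prove it carefully, likely as the one nontrivial step, before assembling the induction.
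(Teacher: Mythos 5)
Your proposal is correct, and its second half departs from the paper's argument in a genuine way. You agree with the paper on the setup: both take $\cp_0$ to be the weakly compact, convex set representing the sublinear distribution of $X_1$ and build $\cp$ from \eqref{e1} with $\cp_i=\cp_0$, so the case $n=1$ is immediate. For the $n$-dimensional matching, however, the paper does not argue by induction on values; it proves a set identity. Writing $\tilde{\cp}$ for the family of all laws on $(\br^2,\cb(\br^2))$ dominated by $\be[\vp(X_1,X_2)]$ as in \eqref{rep} (so that $\be=\max_{P\in\tilde{\cp}}E_P$ by the representation theorem), the paper shows $\cp^{(2)}=\tilde{\cp}$: the inclusion $\cp^{(2)}\subset\tilde{\cp}$ is the same backward-conditioning computation as your upper-bound direction, while the reverse inclusion disintegrates an arbitrary $P\in\tilde{\cp}$ into $\mu_1$ and a kernel $\kappa_2$ and checks membership of each in $\cp_0$ via the domination criterion of Corollary 2.8 in Li and Lin \cite{LL}. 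This completely sidesteps what you call the technical heart: no optimal or near-optimal kernels are ever constructed, so no measurable selection is needed --- the measurability of $\kappa_2$ comes for free from the classical disintegration of $P$. By contrast, your greedy construction must exhibit an $\ve$-optimal element of $\cp$, and the selection lemma you flag is genuinely required; it does go through ($\cp_0$ is weakly compact, hence compact metrizable, and $(x,Q)\mapsto E_Q[\psi(x,\cdot)]$ is jointly continuous for $\psi\in C_{b.Lip}$, so a Kuratowski--Ryll-Nardzewski selection applies), and it can even be made elementary: since $\psi$ is Lipschitz in $x$ uniformly over the kernel variable, a piecewise-constant kernel assembled from $\ve$-optimal measures on a countable partition of $\br^{i-1}$ already attains the supremum up to $\ve$, with measurability trivial --- this mirrors the explicit kernel constructions the paper itself uses later in the proof of Theorem \ref{fclt}. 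What each approach buys: yours handles all $n$ uniformly by induction (the paper treats only $n=2$ ``for simplicity'') and never invokes the multi-dimensional representation theorem or \cite{LL}; the paper's yields the stronger conclusion $\cp^{(2)}=\tilde{\cp}$ (equality of sets of measures, not merely of suprema) and is shorter because disintegration replaces selection, though its assertion that $\kappa_2(x_1,\cdot)\in\cp_0$ for every fixed $x_1$ is really an a.e.-$\mu_1$ statement, harmless since a kernel may be modified on a null set.
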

\begin{proof}
In fact, we only need to prove that
\begin{equation}\label{iddd}
\be[\vp(X_1)]=\pe[\vp(Y_1)], \ \ \forall \vp\in C_{b.Lip}(\br),
\end{equation}
and $\{Y_i\}_{i\in\bn}$ is an i.i.d. sequence defined by Definition \ref{id} and \ref{new-de1}.

To prove (\ref{iddd}), let $\cp_0$ be the set of probability measures on $(\br,\cb(\br))$ defined by
$$\cp_0=\{P: E_P[\vp]\leq\be[\vp(X_1)], \ \vp\in C_{b.Lip}(\br)\}.$$
It is clear that  $\cp_0$ is weakly compact and convex, and
$$\be[\vp(X_1)]=\max_{P\in\cp_0}E_P[\vp], \ \ \forall \vp\in C_{b.Lip}(\br).$$

Let $\cp$ be the set of probability measures on canonical space $(\br^\bn,\cb(\br^\bn))$ constructed by (\ref{e1}) with $\cp_i=\cp_0$, $i\in\bn$. Obviously,
$$\be[\vp(X_1)]=\pe[\vp(Y_1)].$$

To prove the independence of $\{Y_i\}_{i\in\bn}$, for simplicity, we only prove the case of $n=2$. In this case, we do not distinguish canonical random variables $Y_1$ and $Y_2$ on the canonical spaces $(\br^\bn,\cb(\br^\bn))$ or on $(\br^2,\cb(\br^2))$.

By the representation theorem of sublinear expectation (cf. \cite{HL,HLL,HP09}), there exists $\tilde{\cp}$ on $(\br^2,\cb(\br^2))$ defined by
\begin{equation}\label{rep}
\tilde{\cp}=\{P: E_P[\vp]\leq\be[\vp(X_1,X_2)], \ \forall \vp\in C_{b.Lip}(\br^2)\},
\end{equation}
such that
$$\be[\vp(X_1,X_2)]=\max_{P\in\tilde{\cp}}E_P[\vp],\ \ \forall \vp\in C_{b.Lip}(\br^2).$$

Let $\cp^{(2)}$ be the set on $(\br^2,\cb(\br^2))$ constructed by (\ref{e1}) with $\cp_1=\cp_2=\cp_0$. We claim that $\cp^{(2)}=\tilde{\cp}$.

For each $P\in\cp^{(2)}$, we have
\begin{align*}
E_P[\vp(Y_1,Y_2)]&=E_P[E_P[\vp(Y_1,Y_2)|Y_1]]\\
&=E_P[E_P[\vp(y_1,Y_2)|Y_1]|_{y_1=Y_1}]\\
&\leq E_P[\mathbb{E}^{\cp_0}[\vp(y_1,Y_2)]|_{y_1=Y_1}]\\
&\leq\mathbb{E}^{\cp_0}[\mathbb{E}^{\cp_0}[\vp(y_1,Y_2)]|_{y_1=Y_1}]=\be[\vp(Y_1,Y_2)],
\end{align*}
which implies $\cp^{(2)}\subset\tilde{\cp}$.

On the other hand, for fixed $P\in\tilde{\cp}$, $\forall A\in\cb(\br^2)$, we have the following decomposition:
$$P(A)=\int_\br\int_\br I_A\kappa_2(x_1,dx_2)\mu_1(dx_1).$$
It is clear that
$$E_{\mu_1}[\vp]=E_P[\vp]\leq\mathbb{E}^{\cp_0}[\vp], \ \forall \vp\in C_{b.Lip}(\br),$$
then by Corollary 2.8 in Li and Lin \cite{LL}, we obtain $\mu_1\in\cp_0$.

For fixed $x_1\in\br$,
$$E_{\kappa_2(x_1,\cdot)}[\vp(x_1,\cdot)]\leq\mathbb{E}^{\cp_0}[\vp(x_1,\cdot)], \ \forall \vp\in C_{b.Lip}(\br^2),$$
thus $\kappa_2(x_1,\cdot)\in\cp_0$. Therefore,
$$\cp^{(2)}=\tilde{\cp},$$
which implies that $Y_2$ is independent of $Y_1$.
\end{proof}

For random variable $X$ on sublinear expectation $(\Omega,\mathcal{H},\be)$, the corresponding upper and lower variances of $X$ can be defined similarly as in Definition \ref{uvc}, i.e.,
$$\overline{V}(X)=\min_{\lu\leq\mu\leq\ou}\be[(X-\mu)^2] \ \ \text{and} \ \  \underline{V}(X)=\min_{\lu\leq\mu\leq\ou}-\be[-(X-\mu)^2].$$

\subsection{Functional central limit theorem under regular sublinear expectation}
We firstly assume that sublinear expectation $\be$ is regular and present the corresponding functional CLT.

\begin{definition}\label{reg}
The sublinear expectation $\be$ is said to be regular if for each sequence $\{X_i\}_{i\in\bn}\subset\mathcal{H}$ with $X_i\downarrow 0$, we have $\be[X_i]\downarrow 0$.
\end{definition}

If $\be$ is regular, we consider a family of probability measures $\cp_*$ defined on the measurable space $(\Omega,\sigma(\mathcal{H}))$:
\begin{equation}\label{eqp}
\cp_*=\{P \ \text{is\ probability\ measure\ on\ }(\Omega,\sigma(\mathcal{H})): E_P[X]\leq \be[X], \ \ \forall X\in\mathcal{H}\},
\end{equation}
where $\sigma(\mathcal{H})$ is the smallest $\sigma$-algebra generated by $\mathcal{H}$.

By the Robust Daniell-Stone theorem in Peng \cite{P2019}, $\cp_*$ is non-empty and
$$\be[X]=\sup_{P\in\cp_*}E_P[X], \ \ \forall X\in\mathcal{H}.$$
Then the domain of $\be$ can be enlarged from $\mathcal{H}$ to $\mathcal{L}(\sigma(\mathcal{H}))$, i.e.,
$$\be[X]=\sup_{P\in\cp_*}E_P[X], \ \forall X\in\mathcal{L}(\sigma(\mathcal{H})),$$
where $\mathcal{L}(\sigma(\mathcal{H}))$ is the space of all $\sigma(\mathcal{H})$-measurable functions.

Proposition \ref{lem1} can be naturally generalized on the sublinear expectation space $(\Omega,\mathcal{H},\be)$ where $\be$ is regular (see Guo et al. \cite{gll}).
\begin{proposition}\label{lem2}
Let $\{X_i\}_{i\in\bn}$ be an i.i.d. sequence on sublinear expectation space $(\Omega,\mathcal{H},\be)$. We further assume that
\begin{equation*}
\lim_{\lambda\rightarrow\infty}\be[(|X_1|^{2}-\lambda)^+]=0.
\end{equation*}
For fixed $n\in\bn$, and for each $P\in\cp_*$, let $\tX^P_i=X_i-E_P[X_i|\cf_{i-1}], \ 1\leq i\leq n$. Then we have
\begin{equation*}
\underline{V}(X_1)\leq E_P[|\tX^P_i|^2|\cf_{i-1}]\leq\overline{V}(X_1),\ \   P-\text{a.s.},  \  \ 1\leq i\leq n.
\end{equation*}
\end{proposition}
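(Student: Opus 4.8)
The plan is to reduce the abstract statement to its canonical-space counterpart, Proposition~\ref{lem1}, by pushing each $P\in\cp_*$ forward to $\br^n$ and invoking the identification of dominated measures with kernel-decomposed measures established in Proposition~\ref{pp1}.

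First I would record the consequences of regularity. By the Robust Daniell--Stone theorem quoted above, $\be[\cdot]=\sup_{P\in\cp_*}E_P[\cdot]$ extends to $\mathcal{L}(\sigma(\mathcal{H}))$; the moment hypothesis $\lim_{\lambda\to\infty}\be[(|X_1|^2-\lambda)^+]=0$ guarantees $\be[|X_1|^2]<\infty$, so that $\overline{V}(X_1)$ and $\underline{V}(X_1)$ are finite; and, as in the proof of Proposition~\ref{pp1}, the marginal set $\cp_0=\{Q:E_Q[\vp]\le\be[\vp(X_1)],\ \forall\vp\in C_{b.Lip}(\br)\}$ is weakly compact and convex with $\be[\vp(X_1)]=\mathbb{E}^{\cp_0}[\vp(X_1)]$. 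In particular Proposition~\ref{prop34} applies and yields $\underline{V}(X_1)=\min_{Q\in\cp_0}V_Q$ and $\overline{V}(X_1)=\max_{Q\in\cp_0}V_Q$, which is the representation I will match against the conditional variances.

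Next, fix $P\in\cp_*$ and $n\in\bn$, and let $\bar{P}=P\circ(X_1,\dots,X_n)^{-1}$ be the law of $(X_1,\dots,X_n)$ on $(\br^n,\cb(\br^n))$. For every $\vp\in C_{b.Lip}(\br^n)$ one has $E_{\bar{P}}[\vp]=E_P[\vp(X_1,\dots,X_n)]\le\be[\vp(X_1,\dots,X_n)]$, so $\bar{P}$ lies in the dominated set $\tilde{\cp}^{(n)}$. The key step is to identify this dominated set with the kernel-constructed set $\cp^{(n)}$ of \eqref{e1}: this is exactly Proposition~\ref{pp1}, whose two-coordinate claim $\cp^{(2)}=\tilde{\cp}$ extends verbatim by the same disintegration argument to $n$ coordinates, the domination criterion used there placing each kernel in $\cp_0$. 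Hence $\bar{P}$ disintegrates as in \eqref{e1} with $\mu_1\in\cp_0$ and $\kappa_i(x_1,\dots,x_{i-1},\cdot)\in\cp_0$ for $\bar{P}$-a.e.\ $(x_1,\dots,x_{i-1})$ and $2\le i\le n$. I expect this identification to be the main obstacle, since it is this structural fact, rather than the mere inequality $E_P\le\be$, that forces the conditional laws to remain inside $\cp_0$.

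Finally I would transfer the conclusion back to $\Omega$. Since $\tX_i^P=X_i-E_P[X_i|\cf_{i-1}]$ and $E_P[|\tX_i^P|^2|\cf_{i-1}]$ are $\sigma(X_1,\dots,X_i)$-measurable, the factorization lemma writes $E_P[|\tX_i^P|^2|\cf_{i-1}]=v_i(X_1,\dots,X_{i-1})$ $P$-a.s., where $v_i$ is the $\bar{P}$-conditional variance of the $i$-th coordinate given $\cb(\br^{i-1})$; equivalently, $\bar{P}$ is precisely a measure on the canonical space to which Proposition~\ref{lem1} applies directly, giving $v_i(x_1,\dots,x_{i-1})=V_{\kappa_i(x_1,\dots,x_{i-1},\cdot)}\in[\underline{V}(X_1),\overline{V}(X_1)]$ for $\bar{P}$-a.e.\ argument. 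Pushing this $\bar{P}$-a.s.\ inequality back through $(X_1,\dots,X_n)$ yields $\underline{V}(X_1)\le E_P[|\tX_i^P|^2|\cf_{i-1}]\le\overline{V}(X_1)$ $P$-a.s.\ for every $1\le i\le n$, the case $i=1$ reducing to the unconditional bound $V_{\mu_1}(X_1)\in[\underline{V}(X_1),\overline{V}(X_1)]$ from Proposition~\ref{prop34}. The only routine care needed is the existence of regular conditional distributions, which is automatic because $\br^n$ is Polish.
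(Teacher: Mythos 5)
Your proposal is correct and follows exactly the route the paper intends: the paper gives no explicit proof of Proposition~\ref{lem2}, stating only that Proposition~\ref{lem1} ``can be naturally generalized'' (deferring details to Guo et al.~\cite{gll}), and your reduction --- pushing $P\in\cp_*$ forward to $(\br^n,\cb(\br^n))$, identifying the dominated set with the kernel-constructed set of \eqref{e1} via the disintegration argument of Proposition~\ref{pp1}, and then applying Proposition~\ref{lem1} together with Proposition~\ref{prop34} --- is precisely that natural generalization. The only points needing the routine care you already flag are that the kernels lie in $\cp_0$ only for $\bar{P}$-a.e.\ history (modify them on the null set to a fixed element of $\cp_0$, and extend $\bar{P}$ from $\br^n$ to $\br^\bn$ by product kernels in $\cp_0$ before invoking Proposition~\ref{lem1}), and that the moment hypothesis transfers $\be[\vp(X_1)]=\mathbb{E}^{\cp_0}[\vp]$ from $C_{b.Lip}(\br)$ to quadratically growing test functions so that the variance identifications make sense.
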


The functional CLT with mean-uncertainty on the canonical space can be easily extended to the sublinear expectation space associated with regular sublinear expectation.

\begin{theorem}
Let $\{X_i\}_{i\in\bn}$ be an i.i.d. sequence on sublinear expectation space $(\Omega,\mathcal{H},\be)$ with
$$\lim_{\lambda\rightarrow\infty}\be[(|X_1|^2-\lambda)^+]=0.$$
We further assume that $\be$ is regular and is associated with $\cp_*$ defined by (\ref{eqp}).
For each $P\in\cp_*$, let $\tilde{X}^P_i=X_i-E_P[X_i|\cf_{i-1}]$ and $S^{(n)}_P=(S^P_i)_{1\leq i\leq n}$ with $S^P_i=\frac{1}{\sqrt{n}}\sum_{j=1}^i\tilde{X}^P_j$. Then for each continuous function $\vp:C([0,1])\rightarrow\br$ satisfying $|\vp(\omega)|\leq C(1+||\omega||_\infty)$ for some constants $C>0$,
$$\lim_{n\rightarrow\infty}\sup_{P\in\cp_*}E_P\lt[\vp\lt(\hat{S_P^{(n)}}\rt)\rt]=\mathbb{E}^{\cp_\Theta}[\vp],$$
where $\Theta=[\underline{V}(X_1),\overline{V}(X_1)]$.
\end{theorem}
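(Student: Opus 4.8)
The plan is to reduce the statement on the abstract sublinear expectation space $(\Omega,\mathcal{H},\be)$ to the functional CLT already established on the canonical space (Theorem \ref{fclt}), using the representation $\be[X]=\sup_{P\in\cp_*}E_P[X]$ valid for regular $\be$ together with the transfer result of Proposition \ref{pp1}. The key observation is that both sides of the desired identity are expressed entirely through the joint sublinear distribution of finitely many $X_i$: the quantity $\sup_{P\in\cp_*}E_P[\vp(\hat{S_P^{(n)}})]$ depends only on the conditional distributions of $(X_1,\dots,X_n)$ under the measures in $\cp_*$, and by regularity these conditional distributions are exactly the probability kernels governing the canonical model. So the strategy is: build the canonical i.i.d. sequence $\{Y_i\}_{i\in\bn}$ on $(\br^\bn,\cb(\br^\bn))$ with associated set $\cp$ whose marginal set $\cp_0$ is the one generated by $X_1$, as in Proposition \ref{pp1}, and show that the two optimization problems (over $\cp_*$ and over $\cp$) coincide.

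First I would fix $n$ and make precise the correspondence between $\cp_*$ on $(\Omega,\sigma(\mathcal{H}))$ and $\cp$ on the canonical space. By the same decomposition argument used in the proof of Proposition \ref{pp1} (extended from $n=2$ to general $n$ by induction on the kernel factorization), the law of $(X_1,\dots,X_n)$ under any $P\in\cp_*$ pushes forward to a measure on $(\br^n,\cb(\br^n))$ lying in $\cp^{(n)}$, the $n$-fold canonical construction via (\ref{e1}) with $\cp_i=\cp_0$; and conversely every element of $\cp^{(n)}$ arises this way. Since $\hat{S_P^{(n)}}$ is a fixed continuous functional of $(X_1,\dots,X_n)$ and their conditional means under $P$, and since $\vp$ is continuous with linear growth, the value $E_P[\vp(\hat{S_P^{(n)}})]$ depends on $P$ only through this pushforward. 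Hence
\begin{equation*}
\sup_{P\in\cp_*}E_P\lt[\vp\lt(\hat{S_P^{(n)}}\rt)\rt]=\sup_{P\in\cp}E_P\lt[\vp\lt(\hat{S_P^{(n)}}\rt)\rt],
\end{equation*}
where on the right the $S_P^{(n)}$ is built from the canonical $\{Y_i\}$. Proposition \ref{lem2}, which transfers the variance bounds (\ref{conditional1}) to the regular sublinear expectation setting and guarantees $\underline{V}(X_1)=\underline{V}(Y_1)$ and $\overline{V}(X_1)=\overline{V}(Y_1)$, ensures that the interval $\Theta$ is the same on both spaces. Once this identity of suprema is in place, I would simply invoke Theorem \ref{fclt} applied to the canonical sequence $\{Y_i\}_{i\in\bn}$ to conclude that the right-hand limit equals $\mathbb{E}^{\cp_\Theta}[\vp]$, which finishes the proof.

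I expect the main obstacle to be the rigorous justification that the supremum over $\cp_*$ can be replaced by the supremum over the canonical set $\cp$, i.e., that no measure in $\cp_*$ can do ``better'' than what the canonical kernel construction allows, and conversely that every canonical kernel is realized by some $P\in\cp_*$. This is precisely the content extending Proposition \ref{pp1} from the two-step case to an $n$-step conditional factorization: one must verify that for $P\in\cp_*$ the regular conditional distribution $\kappa_i(x_1,\dots,x_{i-1},\cdot)$ lies in $\cp_0$ almost surely (using regularity of $\be$ and the characterization $\cp_0=\{P:E_P[\vp]\le\be[\vp(X_1)]\}$ via Corollary 2.8 of \cite{LL}), and that conversely any admissible choice of kernels yields a measure dominated by $\be$. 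The delicate point is that $S_P^{(n)}$ itself is defined through the $P$-conditional expectations $E_P[X_j|\cf_{j-1}]$, so one must check that the object $\hat{S_P^{(n)}}$ transforms correctly under the pushforward — its definition is intrinsic to the joint distribution, so this is a matter of careful bookkeeping rather than a new idea, but it is where the argument must be written with care. Everything else is a direct appeal to the already-proved canonical Theorem \ref{fclt} and the regularity-based representation of $\be$.
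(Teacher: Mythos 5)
Your proposal is correct and follows essentially the same route as the paper's own proof: the paper likewise constructs $\cp_0$ from the distribution of $X_1$, builds the canonical i.i.d.\ sequence $\{Y_i\}$ with $\cp$ via (\ref{e1}), asserts the equality of suprema $\sup_{P\in\cp_*}E_P[\vp(\hat{S_P^{(n)}})]=\sup_{P\in\cp}E_P[\vp(\hat{T_P^{(n)}})]$ by the argument ``similar to Proposition \ref{pp1}'', and then invokes Theorem \ref{fclt}. The details you flag as delicate (the $n$-step kernel factorization with $\kappa_i(x_1,\dots,x_{i-1},\cdot)\in\cp_0$ via Corollary 2.8 of \cite{LL}, and the invariance of $\hat{S_P^{(n)}}$ under the pushforward) are exactly what the paper leaves implicit, so your write-up is a faithful, slightly more explicit version of the published proof.
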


\begin{proof} Let $\cp_0$ be the weakly compact and convex set of probability measures on $(\br,\cb(\br))$ such that
$$\mathbb{E}^{\cp_0}[\vp]=\be[\vp(X_1)], \ \ \forall \vp\in C_{b.Lip}(\br).$$

For fixed $n\in\bn$,  let $\{Y_i\}_{i\in\bn}$ be the canonical random variables on $(\br^\bn,\cb(\br^\bn))$ and $\cp$ is constructed by (\ref{e1}) with $\cp_i=\cp_0$, $i\in\bn$. Then
$$\be[\vp(X_1,\cdots,X_n)]=\mathbb{E}^{\cp}[\vp(Y_1,\cdots,Y_n)], \ \ \forall \vp\in C_{b.Lip}(\br^n).$$

Furthermore, for each $P\in\cp$, let $\tilde{Y}^P_i=Y_i-E_P[Y_i|\mathcal{G}_{i-1}]$, where $\mathcal{G}_{i-1}=\sigma(Y_i, 1\leq j\leq i)$, and $T_P^{(n)}=(T^P_i)_{1\leq i\leq n}$, where $T^P_i=\frac{1}{\sqrt{n}}\sum_{j=1}^i \tilde{Y}^P_j$.

Similar to Proposition \ref{pp1}, we have
$$\sup_{P\in\cp_*}E_P\lt[\vp\lt(\hat{S_P^{(n)}}\rt)\rt]=\sup_{P\in\cp}E_P\lt[\vp\lt(\hat{T_P^{(n)}}\rt)\rt], \ \forall \vp\in C_{b.Lip}(\Omega_0).$$
Thus Theorem \ref{fcltsub} can be implied by Theorem \ref{fclt}.

\end{proof}

\subsection{Functional central limit theorem without assumption of regularity}
Now we relax the assumption of regularity of $\be$. For fixed $n\in\bn$, let $\cp_*^{(n)}$ denote the class of all probability measures on $(\Omega,\cf_n)$ that dominated by $\be$ in the following sense,
\begin{align*}
\cp_*^{(n)}=\{P: &E_P[\vp(X_1,\cdots,X_n)]\leq\be[\vp(X_1,\cdots,X_n)], \ \forall \vp\in C_{b.Lip}(\br^n)\}.
\end{align*}
where $\cf_n=\sigma(X_1,\cdots,X_n)$ is the natural filtration with convention $\cf_0=\{\emptyset,\Omega\}$.

Then $\be$ is regular on $\mathcal{H}\bigcap\mathcal{L}(\cf_n)$ (see Theorem 10 in Hu and Li \cite{HL}) and the domain of $\be$ can be enlarged naturally from $\mathcal{H}\bigcap\mathcal{L}(\cf_n)$ to $\mathcal{L}(\cf_n)$, which is defined as
$$\be[X]=\sup_{P\in\cp_*^{(n)}}E_P[X], \ \forall X\in\mathcal{L}(\cf_n),$$
where $\mathcal{L}(\cf_n)$ is the space of all $\cf_n$-measurable functions.

We immediately have the functional CLT without the regularity of $\be$ by the similar argument as in the previous subsection.
\begin{theorem}\label{fcltsub}
Let $\{X_i\}_{i\in\bn}$ be an i.i.d. sequence on sublinear expectation space $(\Omega,\mathcal{H},\be)$ with
$$\lim_{\lambda\rightarrow\infty}\be[(|X_1|^2-\lambda)^+]=0.$$
For each $P\in\cp_*^{(n)}$, where $\cp_*^{(n)}$ is dominated by $\be$ on $\mathcal{H}\cap\mathcal{L}(\cf_n)$, let $\tilde{X}^P_i=X_i-E_P[X_i|\cf_{i-1}]$ and $S^{(n)}_P=(S^P_i)_{1\leq i\leq n}$ with $S^P_i=\frac{1}{\sqrt{n}}\sum_{j=1}^i\tilde{X}^P_j$. Then for each continuous function $\vp:C([0,1])\rightarrow\br$ satisfying $|\vp(\omega)|\leq C(1+||\omega||_\infty)$ for some constants $C>0$,
$$\lim_{n\rightarrow\infty}\sup_{P\in\cp_*^{(n)}}E_P\lt[\vp\lt(\hat{S_P^{(n)}}\rt)\rt]=\mathbb{E}^{\cp_\Theta}[\vp],$$
where $\Theta=[\underline{V}(X_1),\overline{V}(X_1)]$.
\end{theorem}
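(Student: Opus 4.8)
The plan is to reduce the statement to the canonical-space functional CLT (Theorem \ref{fclt}) by a finite-horizon representation of $\be$, mirroring the proof of the regular case but replacing the global family $\cp_*$ by the level-$n$ families $\cp_*^{(n)}$. First I would record the marginal representation: let $\cp_0$ be the weakly compact convex set of probability measures on $(\br,\cb(\br))$ with $\mathbb{E}^{\cp_0}[\vp]=\be[\vp(X_1)]$ for all $\vp\in C_{b.Lip}(\br)$, which exists by the representation theorem for the one-dimensional sublinear distribution of $X_1$. The growth hypothesis $\lim_{\lambda\to\infty}\be[(|X_1|^2-\lambda)^+]=0$ transfers verbatim to $\cp_0$, so condition (\ref{c2}) holds. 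I then build the canonical i.i.d. family $\{Y_i\}_{i\in\bn}$ on $(\br^\bn,\cb(\br^\bn))$ with $\cp$ given by (\ref{e1}) and each $\cp_i=\cp_0$.

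The crux is to identify, for each fixed $n$, the set of joint laws of $(X_1,\dots,X_n)$ generated by $\cp_*^{(n)}$ with the finite-dimensional canonical set $\cp^{(n)}$ built from $\cp_0$ via (\ref{e1}). Since $\be$ is regular on $\mathcal{H}\cap\mathcal{L}(\cf_n)$ by Theorem 10 in \cite{HL}, its domain extends to $\mathcal{L}(\cf_n)$ with $\be[\cdot]=\sup_{P\in\cp_*^{(n)}}E_P[\cdot]$, and a measure $P\in\cp_*^{(n)}$ is characterized by $E_P[\vp(X_1,\dots,X_n)]\le\be[\vp(X_1,\dots,X_n)]=\mathbb{E}^{\cp}[\vp(Y_1,\dots,Y_n)]$ for all $\vp\in C_{b.Lip}(\br^n)$. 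I would then run the $n$-fold analogue of the $n=2$ computation in Proposition \ref{pp1}: disintegrate any such $P$ through its probability kernels $\mu_1,\kappa_2,\dots,\kappa_n$, and use the marginal domination together with Corollary 2.8 in \cite{LL} to show each kernel $\kappa_j(x_1,\dots,x_{j-1},\cdot)\in\cp_0$, giving the coincidence $\cp^{(n)}=\{\text{laws of }(X_1,\dots,X_n)\text{ under }\cp_*^{(n)}\}$.

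With this identification in hand I would transfer the supremum. The integrand $\vp(\hat{S_P^{(n)}})$ depends on $P$ only through the joint law of $(X_1,\dots,X_n)$: each $E_P[X_j|\cf_{j-1}]$ is a fixed Borel function of $(X_1,\dots,X_{j-1})$ determined by that law, so $\tilde{X}_j^P$, the partial sums $S_i^P$, the interpolation $\hat{S_P^{(n)}}$, and finally $\vp(\hat{S_P^{(n)}})$ are all measurable functionals of $(X_1,\dots,X_n)$ whose $P$-integral equals the integral against the corresponding canonical measure. Hence
$$\sup_{P\in\cp_*^{(n)}}E_P\lt[\vp\lt(\hat{S_P^{(n)}}\rt)\rt]=\sup_{P\in\cp}E_P\lt[\vp\lt(\hat{T_P^{(n)}}\rt)\rt],$$
where $T_P^{(n)}$ is the canonical analogue of $S_P^{(n)}$. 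Letting $n\to\infty$ and applying Theorem \ref{fclt} to the right-hand side yields the limit $\mathbb{E}^{\cp_\Theta}[\vp]$ with $\Theta=[\underline{V}(X_1),\overline{V}(X_1)]$ for $\vp\in C_{b.Lip}(\Omega_0)$; the linear-growth case then follows from the uniform second-moment bound already established inside the proof of Theorem \ref{fclt}.

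I expect the main obstacle to be the identification $\cp^{(n)}=\cp_*^{(n)}$ (as sets of $n$-dimensional laws) for general $n$, i.e. carrying the two-coordinate kernel argument of Proposition \ref{pp1} to arbitrary horizon while tracking the $\cf_{j-1}$-measurable dependence of the kernels; once this equality is secured, the reduction to Theorem \ref{fclt} is automatic. A secondary point requiring care is that the identification holds only at the level of laws of $(X_1,\dots,X_n)$, which is precisely why working with the finite-horizon families $\cp_*^{(n)}$ rather than a global $\cp_*$ suffices in the absence of regularity of $\be$.
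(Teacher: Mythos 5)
Your proposal is correct and follows essentially the same route as the paper: the paper likewise reduces Theorem \ref{fcltsub} to the canonical-space Theorem \ref{fclt} by representing the sublinear distribution of $X_1$ through a weakly compact convex $\cp_0$, identifying the finite-horizon dominated families with the kernel-built canonical sets via the argument of Proposition \ref{pp1}, and using the regularity of $\be$ on $\mathcal{H}\cap\mathcal{L}(\cf_n)$ (Theorem 10 of Hu and Li \cite{HL}) to make the level-$n$ representation available without global regularity. Your transfer of the suprema and the linear-growth extension coincide with the paper's treatment in Section 4.
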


We can generalize CLT in Peng \cite{P2019} from mean-zero case to the case of mean-uncertainty.
\begin{corollary}\label{cltv}
With the same assumptions in Theorem \ref{fcltsub}, then $\forall \vp\in C(\br)$ with linear growth,
\begin{equation}
\label{th1}\lim_{n\rightarrow\infty}\sup_{P\in\cp_*^{(n)}}E_P\lt[\vp\lt(\frac{\sum_{i=1}^n(X_i-E_P[X_i|\cf_{i-1}])}{\sqrt{n}}\rt)\rt]=\eg[\vp(\xi)],
\end{equation}
where $\eg$ is the $G$-expectation corresponding to the $G$-normally distributed random variable $\xi\sim\mathcal{N}(0,[\underline{V}(X_1),\overline{V}(X_1)])$.
\end{corollary}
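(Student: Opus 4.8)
The plan is to derive Corollary \ref{cltv} as a one-dimensional projection of the functional CLT in Theorem \ref{fcltsub}. The key observation is that evaluating a path functional at the terminal time $t=1$ is a continuous operation, so the functional statement should immediately specialize to the sum $S^P_n = \frac{1}{\sqrt{n}}\sum_{i=1}^n(X_i-E_P[X_i|\cf_{i-1}])$. First I would fix $\vp\in C(\br)$ with linear growth and define $\tilde\vp:C([0,1])\rightarrow\br$ by $\tilde\vp(\omega)=\vp(\omega_1)$, i.e., composition with the terminal-value evaluation map $\omega\mapsto\omega_1$. Since $|\omega_1|\leq\|\omega\|_\infty$, the linear growth of $\vp$ gives $|\tilde\vp(\omega)|\leq C(1+\|\omega\|_\infty)$, so $\tilde\vp$ satisfies the hypothesis of Theorem \ref{fcltsub}; and $\tilde\vp$ is continuous on $\Omega_0$ because evaluation at $t=1$ is continuous in the sup-norm. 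By construction $\tilde\vp\big(\hat{S_P^{(n)}}\big)=\vp\big(\hat{S_P^{(n)}}(1)\big)=\vp(S^P_n)$, since the interpolated path $\hat{S_P^{(n)}}$ takes the value $S^P_n$ at its right endpoint $t=1$.

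Applying Theorem \ref{fcltsub} to $\tilde\vp$ then yields
\begin{equation*}
\lim_{n\rightarrow\infty}\sup_{P\in\cp_*^{(n)}}E_P\lt[\vp\lt(\frac{\sum_{i=1}^n(X_i-E_P[X_i|\cf_{i-1}])}{\sqrt{n}}\rt)\rt]=\mathbb{E}^{\cp_\Theta}[\tilde\vp],
\end{equation*}
so the remaining task is purely to identify the right-hand side. I would invoke the representation of $G$-expectation recalled in Section 3.1: for $\Theta=[\underline{V}(X_1),\overline{V}(X_1)]$, the upper expectation $\mathbb{E}^{\cp_\Theta}$ restricted to functionals depending only on the terminal value $B_1$ coincides with the $G$-expectation $\eg$ of the corresponding function of the $G$-normal random variable. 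Concretely, $\mathbb{E}^{\cp_\Theta}[\tilde\vp]=\mathbb{E}^{\cp_\Theta}[\vp(B_1)]=\eg[\vp(\xi)]$ where $\xi\sim\mathcal{N}(0,[\underline{V}(X_1),\overline{V}(X_1)])$, which is exactly the claimed limit. This is the standard fact that the one-dimensional marginal at time $1$ of $G$-Brownian motion with variance interval $\Theta$ is $G$-normal with that same variance interval.

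The main obstacle, and the only genuinely non-routine point, is the reduction from the linear-growth class to a form where the representation machinery applies cleanly, together with justifying the terminal-value identification for unbounded $\vp$. For bounded Lipschitz $\vp$ the composition $\tilde\vp\in C_{b.Lip}(\Omega_0)$ and everything is immediate; the extension to merely continuous $\vp$ of linear growth rides on the same uniform second-moment bound $\sup_{P}E_P[\sup_{0\leq t\leq1}|\hat{S_P^{(n)}}(t)|^2]\leq 4\overline{V}(X_1)+2\mathbb{E}^{\cp}[|X_1|^2]$ already established in the proof of Theorem \ref{fclt}, which controls the tails uniformly in $P$ and $n$ and thereby lets one truncate $\vp$ and pass to the limit exactly as in Lemma 2.4.12 of Peng \cite{P2019}. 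Since Theorem \ref{fcltsub} is already stated for this linear-growth class, I would simply cite it directly for $\tilde\vp$ and spend the argument on confirming $\mathbb{E}^{\cp_\Theta}[\vp(B_1)]=\eg[\vp(\xi)]$; I expect this identification step to be the part requiring an explicit cross-reference rather than fresh estimation.
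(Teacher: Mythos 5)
Your proposal is correct and is precisely the argument the paper intends: the corollary is obtained by applying Theorem \ref{fcltsub} to the terminal-value functional $\tilde\vp(\omega)=\vp(\omega_1)$ (which inherits continuity and linear growth, and satisfies $\tilde\vp\bigl(\hat{S_P^{(n)}}\bigr)=\vp(S^P_n)$ since the interpolated path equals $S^P_n$ at $t=1$), and the identification $\mathbb{E}^{\cp_\Theta}[\vp(B_1)]=\eg[\vp(\xi)]$ is exactly the content of the remark the paper places immediately after the corollary. Nothing further is needed.
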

\begin{remark}
Let $\xi\sim\mathcal{N}(0,[\ls^2,\os^2])$, we have
$$\eg[\vp(\xi)]=\mathbb{E}^{\cp_\Theta}[\vp],$$
where $\Theta=[\ls^2,\os^2]$ and $\vp$ in the right hand can be regarded as $\vp\in C(\Omega_0)$ with $\vp(\omega)=\vp(\omega_1)$,  $\forall \omega\in\Omega_0$.
\end{remark}

{{It is well-known that the conditional expectation is the least-squares-best predictor of $X_i$ based on the realization of the historical data $\{X_j\}_{1\leq j\leq i-1}$, formula (\ref{th1}) means that the properly normalized sum for i.i.d. random variables which modified by the least-squares-best predictor tends toward the $G$-normal distribution. Corollary \ref{cltv} indicates the broad applicability of $G$-normal distribution. In particular, for i.i.d. random variables with mean-certainty, the corresponding least-square-best predictor are constants which equals to their certain mean. That is just the CLT in \cite{P2019} for the certainty mean. }}

We also have the CLT in the sense of capacity.
\begin{corollary}\label{coro}
Under the same assumptions of Theorem \ref{fcltsub}, we have
\begin{equation}\label{cap}
\lim_{n\rightarrow\infty}\sup_{P\in\cp_*^{(n)}}P\lt(a\leq\frac{\sum_{i=1}^n(X_i-E_P[X_i|\cf_{i-1}])}{\sqrt{n}}\leq b\rt)=V_\Theta([a,b]),
\end{equation}
where $V_\Theta(A):=\sup_{P\in\cp_\Theta}P(A), \ \forall A\in\cb(\br)$ with $\Theta=[\underline{V}(X_1),\overline{V}(X_1)]$ and $-\infty\leq a<b\leq\infty$.
\end{corollary}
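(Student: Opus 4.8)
The plan is to deduce the capacity statement from the one-dimensional convergence already established in Corollary \ref{cltv}, by sandwiching the indicator $I_{[a,b]}$ between bounded continuous functions and then sending the approximation parameter to zero. Write $Z^P_n=\frac{1}{\sqrt n}\sum_{i=1}^n(X_i-E_P[X_i|\cf_{i-1}])$, so that the quantity to be analysed is $\sup_{P\in\cp_*^{(n)}}E_P[I_{[a,b]}(Z^P_n)]$, and recall from the remark following Corollary \ref{cltv} that the limit there equals $\mathbb{E}^{\cp_\Theta}[\vp]$.

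First I would fix $\ve>0$ and choose two bounded continuous functions $\psi_\ve,\vp_\ve:\br\to\br$ with
$$I_{[a+\ve,\,b-\ve]}\leq\psi_\ve\leq I_{[a,b]}\leq\vp_\ve\leq I_{[a-\ve,\,b+\ve]},$$
with the obvious one-sided modification when $a=-\infty$ or $b=+\infty$. Since $\psi_\ve\leq I_{[a,b]}\leq\vp_\ve$, for every $n$ and every $P\in\cp_*^{(n)}$ we have $E_P[\psi_\ve(Z^P_n)]\leq P(a\leq Z^P_n\leq b)\leq E_P[\vp_\ve(Z^P_n)]$; taking the supremum over $P$ and letting $n\to\infty$ via Corollary \ref{cltv} gives
$$\mathbb{E}^{\cp_\Theta}[\psi_\ve]\leq\liminf_{n\to\infty}\sup_{P\in\cp_*^{(n)}}P(a\leq Z^P_n\leq b)\leq\limsup_{n\to\infty}\sup_{P\in\cp_*^{(n)}}P(a\leq Z^P_n\leq b)\leq\mathbb{E}^{\cp_\Theta}[\vp_\ve].$$

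Next I would let $\ve\downarrow0$. On the inner side, $\psi_\ve\uparrow I_{(a,b)}$, so exchanging the two suprema (monotone convergence under each fixed $P$) yields $\mathbb{E}^{\cp_\Theta}[\psi_\ve]\uparrow V_\Theta((a,b))$. On the outer side, $\vp_\ve\downarrow I_{[a,b]}$; here I would invoke the weak compactness of $\cp_\Theta$: picking near-optimal $P_k$ for $\ve_k\downarrow0$, extracting a weakly convergent subsequence $P_k\Rightarrow P^*\in\cp_\Theta$, and comparing $E_{P_k}[\vp_{\ve_k}]\leq E_{P_k}[\vp_{\ve_m}]$ for $k$ large gives $\limsup_k E_{P_k}[\vp_{\ve_k}]\leq E_{P^*}[\vp_{\ve_m}]$, and then $m\to\infty$ gives $E_{P^*}[I_{[a,b]}]\leq V_\Theta([a,b])$; hence $\mathbb{E}^{\cp_\Theta}[\vp_\ve]\downarrow V_\Theta([a,b])$. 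This collapses the two-sided estimate to
$$V_\Theta((a,b))\leq\liminf_{n\to\infty}\sup_{P\in\cp_*^{(n)}}P(a\leq Z^P_n\leq b)\leq\limsup_{n\to\infty}\sup_{P\in\cp_*^{(n)}}P(a\leq Z^P_n\leq b)\leq V_\Theta([a,b]).$$

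The remaining point, which I expect to be the main obstacle, is to close this gap by showing $V_\Theta([a,b])=V_\Theta((a,b))$, i.e. that the endpoints carry no $V_\Theta$-capacity. By subadditivity it suffices to verify $V_\Theta(\{a\})=V_\Theta(\{b\})=0$, and this is where nondegeneracy enters: when $\underline{V}(X_1)>0$, every $P\in\cp_\Theta$ makes $B$ a martingale with $\frac{d\langle B\rangle_t}{dt}\geq\underline{V}(X_1)>0$, so $B_1$ has an absolutely continuous law under $P$ and $P(B_1=a)=P(B_1=b)=0$; taking the supremum over $P$ gives $V_\Theta(\{a\})=V_\Theta(\{b\})=0$. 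The degenerate case $\underline{V}(X_1)=0$ must be treated separately, since the measure with $\langle B\rangle\equiv0$ charges the point $0$: the identity persists whenever neither endpoint equals $0$, while an endpoint at $0$ is handled by accounting for the explicit point mass on both sides of the estimate. Combining the boundary-null property with the two-sided bound yields $\lim_{n\to\infty}\sup_{P\in\cp_*^{(n)}}P(a\leq Z^P_n\leq b)=V_\Theta([a,b])$, as required.
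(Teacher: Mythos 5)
Your skeleton coincides with the paper's own proof: sandwich $\mathbf{1}_{[a,b]}$ between continuous functions, pass to the limit through Corollary \ref{cltv}, and then argue that the endpoints contribute nothing. The paper does exactly this with $\mathbf{1}_{[a+\ve,b-\ve]}\le g^\ve\le \mathbf{1}_{[a,b]}\le f^\ve\le \mathbf{1}_{[a-\ve,b+\ve]}$ and closes the gap in one stroke by citing Corollary 3.8 of \cite{HWZ}, namely $V_\Theta([x-\ve,x+\ve])\to 0$ as $\ve\to 0$ for each $x$, which by subadditivity squeezes $V_\Theta([a+\ve,b-\ve])$ and $V_\Theta([a-\ve,b+\ve])$ together. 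Your weak-compactness limb (extracting $P_k\Rightarrow P^*\in\cp_\Theta$ to get $\mathbb{E}^{\cp_\Theta}[\vp_\ve]\downarrow V_\Theta([a,b])$) is correct and self-contained, and it correctly reduces the whole corollary to the boundary-null statement $P(B_1=a)=P(B_1=b)=0$ for every $P\in\cp_\Theta$. But your justification of that statement is a genuine gap: a martingale law with $d\langle B\rangle_t/dt\in[\underline{V}(X_1),\overline{V}(X_1)]$ and $\underline{V}(X_1)>0$ need \emph{not} give $B_1$ an absolutely continuous law --- Fabes and Kenig (Duke Math.\ J.\ 48, 1981) construct a one-dimensional, uniformly nondegenerate martingale diffusion whose time-one marginal is singular with respect to Lebesgue measure. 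What is true, and all you need, is atomlessness, but that is a nontrivial estimate resting on regularity/barrier bounds for the nondegenerate Bellman ($G$-heat) equation; it is precisely the content the paper outsources to \cite{HWZ}, so it cannot be obtained for free from ``uniform ellipticity implies a density.''

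Your treatment of the degenerate case $\underline{V}(X_1)=0$ is also materially false: the atom is not only at $0$. For any $x>0$, the law under which $B$ runs at maximal speed $\overline{V}(X_1)$ until it first hits $x$ and is frozen thereafter lies in $\cp_\Theta$ (its quadratic-variation density takes values in $\{0,\overline{V}(X_1)\}\subset\Theta$), and it gives $P(B_1=x)=P\bigl(\sup_{t\le 1}B_t\ge x\bigr)>0$, symmetrically for $x<0$. So in the degenerate case \emph{every} point carries positive capacity, and your claim that ``the identity persists whenever neither endpoint equals $0$'' fails by your own mechanism; salvaging the corollary there would require a different idea, e.g.\ showing that mass sitting on an endpoint can be relocated slightly inside the interval so that $V_\Theta((a,b))=V_\Theta([a,b])$ holds despite atoms. (In fairness, the statement the paper cites from \cite{HWZ} can itself only hold when $\underline{V}(X_1)>0$ --- take $x=0$ and the frozen path --- so the paper implicitly assumes nondegeneracy; but in your write-up the defect is explicit, because you attempt to \emph{prove} the boundary-null property and both of your proposed arguments for it break down.)
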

\begin{proof}
For any $0<\ve<\frac{b-a}{2}$, there exists $f^\ve,g^\ve\in C_{b.Lip}(\br)$ such that
$${\bf{1}}_{[a+\ve,b-\ve]}(x)\leq g^\ve(x)\leq {\bf{1}}_{[a,b]}(x)\leq f^\ve(x)\leq {\bf{1}}_{[a-\ve,b+\ve]}(x).$$
Then we have
\begin{align*}\eg[g^\ve(\xi)]&=\liminf_{n\rightarrow\infty}\sup_{P\in\cp_*^{(n)}}E_P\lt[g^\ve\lt(\frac{\sum_{i=1}^nX_i-E_P[X_i|\cf_{i-1}]}{\sqrt{n}}\rt)\rt]\\
&\leq\liminf_{n\rightarrow\infty}\sup_{P\in\cp_*^{(n)}}P\lt(a\leq\frac{\sum_{i=1}^nX_i-E_P[X_i|\cf_{i-1}]}{\sqrt{n}}\leq b\rt)\\
&\leq \limsup_{n\rightarrow\infty}\sup_{P\in\cp_*^{(n)}}P\lt(a\leq\frac{\sum_{i=1}^nX_i-E_P[X_i|\cf_{i-1}]}{\sqrt{n}}\leq b\rt)\\
&\leq\limsup_{n\rightarrow\infty}\sup_{P\in\cp_*^{(n)}}E_P\lt[f^\ve\lt(\frac{\sum_{i=1}^nX_i-E_P[X_i|\cf_{i-1}]}{\sqrt{n}}\rt)\rt]=\eg[f^\ve(\xi)]
\end{align*}
We note that
$$V_\Theta([a+\ve,b-\ve])\leq\eg[g^\ve(\xi)]\leq\eg[f^\ve(\xi)]\leq V_\Theta([a-\ve,b+\ve]),$$
and for each $x\in\br$, by Corollary 3.8 in Hu et al. \cite{HWZ}, $V_\Theta([x-\ve,x+\ve])\rightarrow 0$ as $\ve\rightarrow 0$, which implies (\ref{cap}).
\end{proof}

\section{Examples}

The following example is motivated by Teran \cite{Teran}.
\begin{example}\label{ex1}
Let $(\Omega,\cb(\Omega))$ be a measurable space with $\Omega=0\cup\bn$. Let $X_n(\omega)$ be the $n$-th bit in the binary representation of $\omega$, i.e.,
$$\omega=\sum_{n=1}^\infty 2^{n-1}X_n(\omega)$$
with $X_n(\omega)\in\{0,1\}$.

There does not exist a probability measure $P$ on $\Omega$ such that $\{X_i\}_{i\in\bn}$ is a classical i.i.d. sequence with $E_P[X_1]=\frac{1}{2}$. But we can construct linear expectation space $(\Omega,\mathcal{H},\be)$ such that $\{X_i\}_{i\in\bn}$ is i.i.d. under $\be$ with $\be[X_1]=\frac{1}{2}$.

In particular, by Corollary \ref{cltv}, we have
$$\be\lt[\vp\lt(\frac{\sum_{i=1}^n\lt(X_i-\frac{1}{2}\rt)}{\sqrt{n}}\rt)\rt]=\mathbb{E}_G[\vp(\xi)], \ \ \ \forall \vp\in C_{b.Lip}(\br),$$
where $\xi\sim\mathcal{N}(0,\frac{1}{4})$.

\end{example}

Otherwise, if such $P$ exists, then by classical strong law of large numbers, we have
 $$P\lt(\lim_{n\to\infty}\frac{S_n}{n}=\frac{1}{2}\rt)=1,$$
 but for each $\omega\in\Omega$, $X_n(\omega)=0$ when $n>[\log_2\omega]+2$, thus
 $$\lim_{n\to\infty}\frac{S_n(\Omega)}{n}=0, \ \ \forall \omega\in\Omega,$$
 which is a contradiction.

But for fixed $n\in\bn$, let $P^{(n)}$ be the probability measure on $\Omega$ such that $P^{(n)}(\{i\})=\frac{1}{2^n},\ \forall\ 0\leq i\leq 2^n-1$, and $P^{(n)}(\{i\})=0$, if $i\geq 2^n$. It is clear that $\{X_i\}_{1\leq i\leq n}$ is a classical i.i.d. sequence with $E_{P^{(n)}}[X_1]=\frac{1}{2}$.

 Now we construct $(\Omega,\mathcal{H},\be)$ such that $\{X_i\}_{i\in\bn}$ is i.i.d. as in  Definition \ref{id} and \ref{new-de1}. Indeed, let
 $$\mathcal{H}=\{\vp(X_1,\cdots,X_n), \forall n\in\bn, \forall \vp\in C_{Lip}(\br^n)\},$$
 the linear functional $\be$ on $\mathcal{H}$ is defined by
$$\be[\vp(X_1,\cdots,X_n)]:=E_{P^{(n)}}[\vp(X_1,\cdots,X_n)],\ \ \forall n\in\bn.$$
It can be verified that
$$\be[\vp(X_i)]=\frac{1}{2}(\vp(0)+\vp(1)), \ \forall \vp\in C_{b.Lip}(\br), \ \forall i\in\bn,$$
and for each $n\geq 2$ and $\vp\in C_{b.Lip}(\br^n)$,
$$\be[\vp(X_1,\cdots,X_n)]=\be[\be[\vp(x_1,\cdots,x_{n-1},X_n)]_{(x_1,\cdots,x_{n-1})=(X_1,\cdots,X_{n-1})}].$$
We also note that, for fixed $\omega\in\Omega$, $X_n(\omega)\downarrow 0$, but $\be[X_n]=\frac{1}{2}\nrightarrow 0$.  $\be$ is not regular. In fact, if the linear functional $\be$ satisfying (a) and (b) in Definition \ref{sub} is regular, then by the Daniell-Stone theorem, there exists probability measure which is represented by such $\be$.

We then extend above example to the sublinear expectation space.

\begin{example}
Let $\cp$ be the set of all probability measures on $(\Omega,\cb(\Omega))$ with $\Omega=0\cup\bn$ and $\be[\cdot]=\sup_{P\in\cp}E_P[\cdot]$.

We can verify that the random variables $\{X_i\}_{i\in\bn}$, defined by
$$\omega=\sum_{n=1}^\infty X_n(\omega)2^{n-1}, \ \ \ \forall \omega\in\Omega,$$
are i.i.d. under $\be$ (see Guo et al. \cite{gll}).

It is clear that
$$\be[X_1]=1, \ \ -\be[-X_1]=0, \ \ \overline{V}(X_1)=\frac{1}{4}, \ \ \underline{V}(X_1)=0.$$
Furthermore, $\be$ is not regular since $\be[X_n]=1$ but $X_n(\omega)\downarrow 0$, $\forall \omega\in\Omega$.

By Corollary \ref{cltv}, for each continuous function $\vp\in C(\br)$ with linear growth,
$$\lim_{n\rightarrow\infty}\sup_{P\in\cp}E_P\lt[\vp\lt(\frac{\sum_{i=1}^n(X_i-E_P[X_i|\cf_{i-1}])}{\sqrt{n}}\rt)\rt]=\eg[\vp(\xi)],$$
where $\xi\sim\mathcal{N}(0,[0,\frac{1}{4}])$.
\end{example}

%\begin{example}
%Consider the binary random variable $X_0$ with probability $P_q$, where $q\in [\frac{1}{2},\frac{3}{4}]$, i.e.,
%\begin{equation*}
%P_q(X_0=1)=q;\ \ \ \ P_q(X_0=-1)=1-q.
%\end{equation*}
%We have
%$$E_{P_q}[X_0]=2q-1\in [0,\frac{1}{2}], \ \ \  V_{P_q}(X_0)=4q(1-q)$$
%$$\underline{V}(X_0)=\min_{q\in[\frac{1}{2},\frac{3}{4}]}V_{P_q}(X_0)=\frac{3}{4},\ \ \ \overline{V}(X_0)=\max_{q\in[\frac{1}{2},\frac{3}{4}]}V_{P_q}(X_0)=1.$$
%We can construct i.i.d. sequence $\{X_i\}_{i\in\bn}$ on canonical space $(\Omega,\cb(\Omega))$ with $\Omega=\{-1,1\}^\bn$. The set $\cp$ of probability measures on $\Omega$ is defined by
%$$\cp=\lt\{P: P(X_i=1|\cf_{i-1})=1-P(X_i=-1|\cf_{i-1})=q\in [\frac{1}{2},\frac{3}{4}], i\in\bn\rt\},$$
%where $\cf_{i}=\sigma(X_j, 1\leq j\leq i)$ with convention $\cf_0=\{\emptyset, \Omega\}$.
%
%By Theorem \ref{fcltsub}, for each continuous function $\vp\in C(\br)$ with linear growth,
%$$\lim_{n\rightarrow\infty}\sup_{P\in\cp}E_P\lt[\vp\lt(\frac{\sum_{i=1}^n(X_i-E_P[X_i|\cf_{i-1}])}{\sqrt{n}}\rt)\rt]=\eg[\vp(\xi)],$$
%where $\xi\sim\mathcal{N}(0,[\frac{3}{4},1])$.
%\end{example}

In the end, the following example shows that condition (\ref{uc}) in CLT can not weaken to $\be[|X_1|^2]<\infty$.
\begin{example}
Let $\Omega = \mathbb{Z}$, $\mathcal{F}=\mathcal{B}(\mathbb{Z})$, $\mathcal{P}=\{P_{k}, k \geq 1\}$, where $P_{k}(\{0\})=1-\frac{1}{k^2}$, $P_{k}(\{k\})=P_{k}(\{-k\})=\frac{1}{2k^2}$. Consider a function $X$ on $\mathbb{Z}$ defined by $X(\omega)=\omega,  \omega\in \mathbb{Z}$ and the sublinear expectation $\be[\cdot]=\sup_{P\in\cp}E_P[\cdot]$. We note that $\be[X]=\be[-X]=0$ and $\be[X^2]=-\be[-X^2]=1$. We are able to construct an i.i.d. sequence $\{X_{i}\}_{i\in\bn}$ on some sublinear expectation space $(\tilde{\Omega},\tilde{\mathcal{H}},\tilde{\mathbb{E}})$ such that $X_{i}$ has the same distribution with $X$. Then CLT does not hold.
\end{example}
In fact, we consider $\varphi(x)=1-|x|$. Let $S_n=X_1+\cdots+X_n$.

We first observe that
\begin{align*}\tilde{\mathbb{E}}\lt[\vp\lt(\frac{x+X}{\sqrt{n}}\rt)\rt]&=\sup_{k\in\mathbb{N}}\lt\{\lt(1-\frac{1}{k^2}\rt)\vp\lt(\frac{x}{\sqrt{n}}\rt)+\frac{1}{2k^2}\lt(\vp\lt(\frac{x-k}{\sqrt{n}}\rt)+\vp\lt(\frac{x+k}{\sqrt{n}}\rt)\rt)\rt\}\\
&=\vp\lt(\frac{x}{\sqrt{n}}\rt)+\frac{1}{\sqrt{n}}\sup_{k\in\mathbb{N}}\lt\{\frac{2|x|-|x+k|-|x-k|}{2k^2}\rt\}=\vp\lt(\frac{x}{\sqrt{n}}\rt).
\end{align*}
Then
\begin{align*}
\tilde{\mathbb{E}}\lt[\vp\lt(\frac{S_n}{\sqrt{n}}\rt)\rt]&=\tilde{\mathbb{E}}\lt[\tilde{\mathbb{E}}\lt[\vp\lt(\frac{x+X_n}{\sqrt{n}}\rt)\rt]|_{x=X_1+\cdots+X_{n-1}}\rt]
=\tilde{\mathbb{E}}\lt[\vp\lt(\frac{S_{n-1}}{\sqrt{n}}\rt)\rt]\\
&=\cdots=\tilde{\mathbb{E}}\lt[\vp\lt(\frac{X_1}{\sqrt{n}}\rt)\rt]=\vp(0)=1.
\end{align*}

Finally,
$$\lim_{n\rightarrow\infty}\tilde{\mathbb{E}}\lt[\varphi\lt(\frac{S_n}{\sqrt{n}}\rt)\rt]=1>\eg[\vp(\xi)],$$
where $\xi\sim\mathcal{N}(0,1)$.

\section{Appendix: Law of large numbers under sublinear expectation}

In this section, we firstly consider the law of large numbers on canonical space.

\begin{theorem}
Let $\{X_i\}_{i\in\bn}$ be a canonical i.i.d. sequence on $(\br^\bn,\cb(\br^\bn))$ and $\cp$ be the set of probability measures on $\br^\bn$ construct by (\ref{e1}) with $\cp_i=\cp_1$, where $\cp_1$ satisfies the following condition
\begin{equation}\label{uccc}
\lim_{\lambda\to\infty}\mathbb{E}^{\cp_1}[(|X_1|-\lambda)^+]=0.
\end{equation}
Then for each $\vp\in C(\br)$ with linear growth, we have
\begin{equation}\label{lln1}
\lim_{n\to\infty}\pe\lt[\vp\lt(\frac{\sum_{i=1}^nX_i}{n}\rt)\rt]=\max_{\lu\leq\mu\leq\ou}\vp(\mu),
\end{equation}
where $\ou=\pe[X_1]$ and $\lu=-\pe[-X_1]$.

In particular,
\begin{equation}\label{lln2}
\lim_{n\to\infty}\pe\lt[d_\Theta\lt(\frac{\sum_{i=1}^nX_i}{n}\rt)\rt]=0,
\end{equation}
where $d_\Theta(x)=\inf_{y\in\Theta}|x-y|$ with $\Theta=[\lu,\ou]$.
\end{theorem}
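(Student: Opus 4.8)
The plan is to prove the law of large numbers for the canonical i.i.d. sequence by first establishing the key inequality $(\ref{lln1})$ and then deducing $(\ref{lln2})$ as a special case. I would begin by reducing to the case $\vp\in C_{b.Lip}(\br)$ using the uniform integrability condition $(\ref{uccc})$, since the linear growth of $\vp$ combined with this condition permits the usual truncation argument to pass from bounded Lipschitz test functions to general $\vp$ with linear growth. So the heart of the matter is to prove, for $\vp\in C_{b.Lip}(\br)$,
$$\lim_{n\to\infty}\pe\lt[\vp\lt(\frac{\sum_{i=1}^nX_i}{n}\rt)\rt]=\max_{\lu\leq\mu\leq\ou}\vp(\mu).$$

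For the upper bound, the natural approach is a one-step conditioning estimate. Writing $S_n=\sum_{i=1}^n X_i$ and fixing any $P\in\cp$, I would condition on $\cf_{n-1}$ and use Proposition \ref{prop23} together with $(\ref{eq2})$, which tells us that the conditional law of $X_n$ under $P$ lies in $\cp_1$, so its conditional mean sits in $[\lu,\ou]$. The idea is that adding one more increment $X_n/n$ barely changes the argument $S_{n-1}/n$ when measured against a Lipschitz function, and one builds a telescoping/backward-induction scheme: each step replaces $X_i$ by a value whose conditional mean lies in $[\lu,\ou]$, while the fluctuation of $X_i/n$ around its conditional mean contributes an error controlled by the Lipschitz constant times $\pe[|X_1-\mu|]/n$ summed over $i$, which can be dominated using $(\ref{uccc})$. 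This yields $\limsup_n \pe[\vp(S_n/n)]\leq\max_{\lu\leq\mu\leq\ou}\vp(\mu)$. For the matching lower bound, I would select the maximizing $\mu^*\in[\lu,\ou]$ and then exhibit a specific $P_*\in\cp$ under which each $X_i$ is chosen to have mean $\mu^*$ (possible since $\mu^*\in[\lu,\ou]$ and $\cp_1$ is constructed to realize any mean in this interval via a suitable kernel); under such $P_*$ the classical weak law of large numbers, valid thanks to $(\ref{uccc})$ guaranteeing the requisite integrability, gives $S_n/n\to\mu^*$ in probability, and hence $E_{P_*}[\vp(S_n/n)]\to\vp(\mu^*)$, forcing $\liminf_n\pe[\vp(S_n/n)]\geq\vp(\mu^*)=\max_{\lu\leq\mu\leq\ou}\vp(\mu)$.

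Once $(\ref{lln1})$ is established for all $\vp\in C(\br)$ with linear growth, the special case $(\ref{lln2})$ follows immediately by taking $\vp=d_\Theta$, since $d_\Theta$ is Lipschitz (indeed $1$-Lipschitz) with linear growth and vanishes precisely on $\Theta=[\lu,\ou]$; thus $\max_{\lu\leq\mu\leq\ou}d_\Theta(\mu)=0$ and $(\ref{lln1})$ collapses to $(\ref{lln2})$.

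The main obstacle I anticipate is the upper bound argument, specifically making the backward-induction error control fully rigorous under \emph{sublinear} expectation where one cannot simply take expectations as in the linear theory. The subtlety is that $\pe$ is only subadditive, so the conditioning step must be phrased via the sup over $P\in\cp$ and the kernel structure $(\ref{e1})$ rather than through an iterated-expectation identity; one must verify that $\sup_{P\in\cp}E_P[\,\cdot\,]$ decomposes compatibly with the one-step-ahead conditional law lying in $\cp_1$, which is exactly what Proposition \ref{prop23} supplies. Getting the truncation for the linear-growth extension to interact cleanly with this sublinear conditioning — so that the error terms genuinely vanish uniformly in $P$ as $n\to\infty$ — is where the care is required.
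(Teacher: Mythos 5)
Your reduction to $\vp\in C_{b.Lip}(\br)$ via (\ref{uccc}), your lower bound (pick a maximizer $\mu^*\in[\lu,\ou]$, realize it by a product measure $P_*\in\cp$ with $E_{P_*}[X_1]=\mu^*$, and invoke the classical law of large numbers), and your deduction of (\ref{lln2}) by taking $\vp=d_\Theta$ all coincide with the paper's argument. The genuine gap is in your upper bound: the error control you propose does not close. Replacing each increment $X_i/n$ by its conditional mean at a Lipschitz cost of $L_\vp\,\pe[|X_1-\mu|]/n$ per step and summing over $i=1,\dots,n$ leaves a total error of order $L_\vp\,\pe[|X_1-\mu|]$, a constant that does not vanish as $n\to\infty$ (it is zero only when $X_1$ is deterministic under every $P$). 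A per-step first-moment telescoping bound is blind to the cancellation among the $n$ centered increments; the identical scheme would already fail to prove the classical weak law of large numbers under a single probability measure. Contrary to your closing diagnosis, the sublinearity of $\pe$ is not the obstruction here --- all estimates can be carried out under each fixed $P\in\cp$, with the supremum over $P$ taken only at the end --- the obstruction is that your per-step bound is quantitatively too weak.

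The ingredient your sketch is missing, and which the paper supplies, is an $L^2$ martingale-orthogonality estimate after truncation. Set $\tX_k=(-n\vee X_k)\wedge n$ and $\tilde{S}_n=\sum_{i=1}^n\tX_i$; then $E_P\lt[\lt|\vp(S_n/n)-\vp(\tilde{S}_n/n)\rt|\rt]\leq L_\vp\,\pe[(|X_1|-n)^+]\to 0$ uniformly in $P$ by (\ref{uccc}). Since the $\tX_i-E_P[\tX_i|\cf_{i-1}]$ are martingale differences under $P$, orthogonality gives
\begin{align*}
E_P\lt[\lt|\frac{\tilde{S}_n}{n}-\frac{1}{n}\sum_{i=1}^nE_P[\tX_i|\cf_{i-1}]\rt|^2\rt]
=\frac{1}{n^2}\sum_{i=1}^nE_P\lt[\lt(\tX_i-E_P[\tX_i|\cf_{i-1}]\rt)^2\rt]\longrightarrow 0
\end{align*}
uniformly in $P\in\cp$ (the paper bounds this by $2\,\pe[X_1^2]/n$; when only (\ref{uccc}) is available one truncates at a level $o(n)$, say $\sqrt{n}$, to get the same conclusion). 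It is this $O(n^{-1/2})$-order cancellation of the centered sum, not a Lipschitz bound applied increment by increment, that kills the fluctuation term. The truncation also makes the conditional-mean step clean: $\tX_i$ is bounded, so Proposition \ref{prop23} applies directly and places each $E_P[\tX_i|\cf_{i-1}]$ in $[\lu_n,\ou_n]$ with $\ou_n=\pe[\tX_1]\to\ou$ and $\lu_n=-\pe[-\tX_1]\to\lu$ by (\ref{uccc}), whence $\limsup_{n\to\infty}\pe\lt[\vp\lt(S_n/n\rt)\rt]\leq\max_{\lu\leq\mu\leq\ou}\vp(\mu)$ as desired. With this repair your outline becomes the paper's proof.
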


\begin{proof}
It is clear that (\ref{lln2}) can be implied from (\ref{lln1}) by taking $\vp(x)=d_\Theta(x)$. To prove (\ref{lln1}), we only need to prove that it holds for $\vp\in C_{b,Lip}(\br)$. Indeed, since
\begin{align*}
\pe\lt[\lt(\lt|\frac{S_n}{n}\rt|-\lambda\rt)^+\rt]\leq\frac{1}{n}\pe[(\sum_{i=1}^n|X_i|-\lambda)^+]\leq\pe[(|X_1|-\lambda)^+]\to 0,
\end{align*}
we can extend (\ref{lln1}) from $C_{b,Lip}(\br)$ to $C(\br)$ with linear growth by the similar argument in Peng \cite{P2019}.

The proof of (\ref{lln1}) for $\vp\in C_{b.Lip}(\br)$ is divided into two steps.

Firstly, let $\tX_k=(-n\vee X_k)\wedge n$, $1\leq k\leq n$, $\ou_n=\pe[\tX_n]$ and $\lu_n=-\pe[-\tX_n]$, $\forall n\in\bn$. We denote $\{\cf_i\}_{i\in\bn}$ the natural filtration generalized by $\{X_i\}_{i\in\bn}$ and  $S_n=\sum_{i=1}^nX_i$, $\tilde{S}_n=\sum_{i=1}^n\tX_i$.

For each $P\in\cp$, we have
$$E_P\lt[\lt|\vp\lt(\frac{S_n}{n}\rt)-\vp\lt(\frac{\tilde{S}_n}{n}\rt)\rt|\rt]\leq \frac{L_\vp}{n}E_P[|\sum_{i=1}^n(X_i-\tX_i)|]\leq L_\vp\pe[(|X_1|-n)^+]\to 0.$$
\begin{align*}
&E_P\lt[\lt|\vp\lt(\frac{\sum_{i=1}^n\tilde{S}_n}{n}\rt)-\vp\lt(\frac{\sum_{i=1}^nE_P[\tX_i|\cf_{i-1}]}{n}\rt)\rt|^2\rt]\\
\leq& \frac{L_\vp^2}{n^2}E_P[\sum_{i=1}^n(\tX_i-E_P[\tX_i|\cf_{i-1}])^2]\\
\leq&\frac{L_\vp^2\sum_{i=1}^nE_P[(\tX_i-E_P[\tX_i|\cf_{i-1}])^2]}{n^2}\leq \frac{2L_\vp^2\pe[X_1^2]}{n}\to 0
\end{align*}
where $L_\vp$ is Lipschitz constant of $\vp$. The proof of the last convergence is very similar to the classical case.

It is clear that $\ou_n\rightarrow\ou$ and $\lu_n\to\lu$ by (\ref{uccc}). Combining with Proposition \ref{prop23},  we imply that
$$\vp\lt(\frac{\sum_{i=1}^nE_P[\tX_i|\cf_{i-1}]}{n}\rt)\leq\max_{\frac{\sum_{i=1}^n\lu_i}{n}\leq\mu\leq\frac{\sum_{i=1}^n\ou_i}{n}}\vp(\mu)\to\max_{\lu\leq\mu\leq\ou}\vp(\mu).$$

Thus we can conclude that
$$\limsup_{n\to\infty}\pe\lt[\vp\lt(\frac{S_n}{n}\rt)\rt]\leq\max_{\lu\leq\mu\leq\ou}\vp(\mu).$$

Secondly, for fixed $\vp\in C_{b.Lip}(\br)$, there exists $\mu^*\in[\lu,\ou]$ such that $\vp(\mu^*)=\max_{\lu\leq\mu\leq\ou}\vp(\mu)$. By the construction of $\cp$, there exists $P_*\in\cp$ such that $\{X_i\}_{i\in\bn}$ is an i.i.d. sequence with $E_{P^*}[X_1]=\mu^*$. The classical law of large numbers shows that
$$\lim_{n\to\infty}E_{P^*}\lt[\vp\lt(\frac{S_n}{n}\rt)\rt]=\vp(\mu^*),$$
which implies that
$$\liminf_{n\to\infty}\pe\lt[\vp\lt(\frac{S_n}{n}\rt)\rt]\geq\max_{\lu\leq\mu\leq\ou}\vp(\mu).$$
The proof is completed.

\end{proof}

By Proposition \ref{pp1}, we can extend law of large numbers from canonical space to the sublinear expectation space, which is the law of large numbers in Peng \cite{P2019}.
\begin{theorem}
Let $\{X_i\}_{i\in\bn}$ be an i.i.d. sequence on sublinear expectation space $(\Omega,\mathcal{H},\be)$ with
$$\lim_{\lambda\rightarrow\infty}\be[(|X_1|^2-\lambda)^+]=0.$$
For each $\vp\in C(\br)$ with linear growth, we have
$$\lim_{n\to\infty}\be\lt[\vp\lt(\frac{S_n}{n}\rt)\rt]=\max_{\lu\leq\mu\leq\ou}\vp(\mu),$$
where $\lu=-\be[-X_1]$ and $\ou=\be[X_1]$.
\end{theorem}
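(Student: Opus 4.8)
The plan is to reduce the statement to the law of large numbers already proved on the canonical space (the first theorem of this Appendix, with conclusion (\ref{lln1})) by transporting the problem through the representation of Proposition \ref{pp1}. First I would fix the weakly compact convex set $\cp_0$ on $(\br,\cb(\br))$ representing the one-dimensional sublinear distribution of $X_1$, i.e. $\be[\vp(X_1)]=\mathbb{E}^{\cp_0}[\vp]$ for all $\vp\in C_{b.Lip}(\br)$, and let $\{Y_i\}_{i\in\bn}$ be the associated canonical i.i.d. sequence on $(\br^\bn,\cb(\br^\bn))$ furnished by Proposition \ref{pp1}, with $\cp$ built from $\cp_i=\cp_0$. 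I would then check that the standing quadratic hypothesis transfers into the linear tail condition (\ref{uccc}) required by the canonical theorem: from the elementary pointwise bound $(|x|-\lambda)^+\leq\frac{1}{\lambda}(|x|^2-\lambda^2)^+$ one gets $\be[(|X_1|-\lambda)^+]\leq\frac{1}{\lambda}\be[(|X_1|^2-\lambda^2)^+]\to 0$, and since $\be$ and $\mathbb{E}^{\cp_0}$ represent the same one-dimensional distribution (the representation extending to Lipschitz functions of linear growth under uniform integrability), the condition (\ref{uccc}) holds for $\cp_0$.

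Next, for each fixed $n$ and each $\psi\in C_{b.Lip}(\br)$, I would apply Proposition \ref{pp1} to the test function $\vp_n(x_1,\dots,x_n):=\psi\left(\frac{x_1+\cdots+x_n}{n}\right)$, which lies in $C_{b.Lip}(\br^n)$; the proposition then yields $\be[\psi(S_n/n)]=\mathbb{E}^{\cp}[\psi(T_n/n)]$ with $T_n=\sum_{i=1}^n Y_i$. Because the representation persists for the linear-growth maps $x\mapsto x$ and $x\mapsto -x$ under the moment bound, the mean interval is preserved, $\ou=\be[X_1]=\mathbb{E}^{\cp}[Y_1]$ and $\lu=-\be[-X_1]=-\mathbb{E}^{\cp}[-Y_1]$, so the two intervals $[\lu,\ou]$ coincide. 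The canonical law of large numbers (\ref{lln1}) gives $\lim_n\mathbb{E}^{\cp}[\psi(T_n/n)]=\max_{\lu\leq\mu\leq\ou}\psi(\mu)$, whence $\lim_n\be[\psi(S_n/n)]=\max_{\lu\leq\mu\leq\ou}\psi(\mu)$ for every $\psi\in C_{b.Lip}(\br)$.

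The remaining and only delicate step is the passage from bounded Lipschitz $\psi$ to a general $\vp\in C(\br)$ of linear growth, which I would carry out directly on $(\Omega,\mathcal{H},\be)$ as in Peng \cite{P2019}. The key estimate is
$$\left(\left|\frac{S_n}{n}\right|-\lambda\right)^+\leq\frac{1}{n}\sum_{i=1}^n(|X_i|-\lambda)^+,$$
which together with sub-additivity and identical distribution gives $\be[(|S_n/n|-\lambda)^+]\leq\be[(|X_1|-\lambda)^+]\to 0$ as $\lambda\to\infty$ by the moment hypothesis; approximating $\vp$ by a bounded Lipschitz function agreeing with it on $[-\lambda,\lambda]$ and bounding the excess by this uniformly small tail upgrades the convergence to all $\vp\in C(\br)$ of linear growth. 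I expect this truncation and approximation bookkeeping to be the main (though routine) obstacle, precisely because Proposition \ref{pp1} transfers the identity only along $C_{b.Lip}$ test functions and so cannot be invoked directly for unbounded $\vp$.
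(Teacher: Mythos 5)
Your proposal is correct and follows essentially the paper's own route: the paper proves this theorem in one line by invoking Proposition \ref{pp1} to transfer the canonical-space law of large numbers (\ref{lln1}) to $(\Omega,\mathcal{H},\be)$, which is precisely the reduction you carry out (matching tail condition via $(|x|-\lambda)^+\leq\frac{1}{\lambda}(|x|^2-\lambda^2)^+$, matching means, transfer along $C_{b.Lip}$ tests, then truncation for linear growth exactly as in the canonical proof). The one point worth making explicit is that identical distribution is defined only through $C_{b.Lip}$ test functions, so your bound $\be[(|X_i|-\lambda)^+]\leq\be[(|X_1|-\lambda)^+]$ requires extending the distributional identity to the unbounded Lipschitz function $(|x|-\lambda)^+$ by bounded truncation together with a representation of $\be$ on finite-dimensional functionals (e.g.\ the automatic regularity of $\be$ on $\mathcal{H}\cap\mathcal{L}(\cf_n)$ from Hu and Li \cite{HL}, used by the paper in Section 4.3), a routine step at the same level of detail the paper itself assumes.
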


\section*{Acknowledgements}

The author gratefully acknowledges the many helpful suggestions of Prof. Shige PENG during the preparation of the paper.

This work was supported by NSF of Shandong Provence (No.ZR2021MA018),  NSF of China (No.11601281),  National Key R\&D Program of China (No.2018YFA0703900) and the Young Scholars Program of Shandong University.

%% Authors are advised to submit their bibtex database files. They are
%% requested to list a bibtex style file in the manuscript if they do
%% not want to use elsarticle-num.bst.

%% References without bibTeX database:

\end{document}